\titleformat*{\section}{\large\bfseries}
\titleformat*{\subsection}{\bfseries}
\titleformat*{\subsubsection}{\itshape}
\definecolor{DarkMidnightBlue}{rgb}{0.0, 0.2, 0.5}
\newcommand{\pder}[2][]{\frac{\partial #1}{\partial #2}}
\renewcommand{\P}{\mathrm{P}}
\newcommand{\e}{\mathrm{e}}
\newcommand{\ind}{\mathbf{1}}
\newcommand{\dd}{ \mathrm{d}}
\renewcommand{\epsilon}{\varepsilon}
\renewcommand{\tilde}{\widetilde}
\numberwithin{equation}{section}
\let\originalmiddle=\middle
\def\middle#1{\mathrel{}\originalmiddle#1\mathrel{}}
\newtheorem{theorem}{Theorem}[section]
\newtheorem{lemma}[theorem]{Lemma}
\newtheorem{proposition}[theorem]{Proposition}
\newtheorem{corollary}[theorem]{Corollary}
\theoremstyle{definition}
\newtheorem{remark}[theorem]{Remark}
\newtheorem{assumption}[theorem]{Assumption}
\newcommand{\R}{\mathbb R}
\definecolor{lime}{HTML}{A6CE39}
\DeclareRobustCommand{\orcidicon}{%
	\begin{tikzpicture}
	\draw[lime, fill=lime] (0,0) 
	circle [radius=0.16] 
	node[white] {{\fontfamily{qag}\selectfont \tiny ID}};
	\draw[white, fill=white] (-0.0625,0.095) 
	circle [radius=0.007];
	\end{tikzpicture}
	\hspace{-2mm}
}
\xdef\csname orcid\x\endcsname{\noexpand\href{https://orcid.org/\csname orcidauthor\x\endcsname}{\noexpand\orcidicon}}
\newcommand*\bigldot{\mathpalette\bigldot@{.4}}
\newcommand*\bigldot@[2]{\mathbin{{\hbox{\scalebox{#2}{$\m@th#1\bullet$}}}}}
\newcommand{\bigdot}{} 
\DeclareRobustCommand\bigdot{%
  \mathbin{\mathpalette\bigdot@{0.5}}%
}
\newcommand{\bigdot@}[2]{%
  \vcenter{\hbox{\scalebox{#2}{$\m@th#1\bullet$}}}%
}
\newcommand{\xmiddle}[1]{\,\middle#1\,}
\renewcommand{\phi}{\varphi}
\newcommand{\spectral}{|\!|}
\title{\bf Guided smoothing and control for diffusion processes}
\author{\small\bf Oskar Eklund, Annika Lang\orcidA{}\!, Moritz Schauer\thanks{Contact: \url{smoritz@chalmers.se}}\,\orcidD{}}
\affil{\small Department of Mathematical Sciences, Chalmers University of Technology and University of Gothenburg}
\date{}
\pgfplotsset{compat=1.16} 
\begin{document}
\maketitle
 
\begin{abstract} 
The smoothing distribution is the conditional distribution of the diffusion process in the
space of trajectories given noisy observations made continuously in time. It is
generally difficult to sample from this distribution. We use the theory of enlargement
of filtrations to show that the conditional process has an additional drift term
derived from the backward filtering distribution that is moving or guiding the
process towards the observations. This term is intractable, but its effect can be
equally introduced by replacing it with a heuristic, where importance weights correct for the discrepancy. From this Markov Chain Monte Carlo and
sequential Monte Carlo algorithms are derived to sample from the smoothing distribution. 
The choice of the guiding heuristic is discussed from an optimal control perspective and evaluated. The results are tested numerically on a stochastic differential equation for reaction-diffusion.

\end{abstract}

\section{Introduction}
Diffusion processes are continuous-time probabilistic models with applications across various fields, including finance, physics, and engineering, see, e.g., \cite{Oksendal2003,MT04, sarkka2019applied}. They naturally emerge by introducing Gaussian random perturbations (white noise) to deterministic systems. We consider the problem of smoothing, that is, determining the conditional distribution of the process' trajectory given observational data by a variant of Bayes' rule.  
Intuitively, this conditional distribution represents the knowledge about the stochastic trajectory given the observation data. 
We start with introducing our most central stochastic processes.
Let $X=(X_t, t \in {[0, T]})$  be a $d$-dimensional diffusion process of interest, which is indirectly observed as detailed below. Its dynamics are governed by the stochastic differential equation (SDE)
\begin{align}
\label{SDE}
    \dd X_t = b(t,X_t) \, \dd t + \sigma(t,X_t) \, \dd W_t, 
\end{align}
on the time interval $t \in [0,T]$ with $T>0$ some final time, initial value $X_0 = x_0$, and where $W$ is a $d'$-dimensional Brownian motion. Here, the drift coefficient $b\colon [0,T]\times \R^d \to \R^d$ and the dispersion coefficient $\sigma\colon [0,T] \times \R^d \to \R^{d \times d'}$ are sufficiently smooth functions.  We denote $a = \sigma \sigma'$.

Let $Y=(Y_t, t \in {[0, T]}) $ be the $D$-dimensional observation process with initial value $Y_0 = 0$ and dynamics governed by the SDE
\begin{align}
\label{obs}
    \dd Y_t = H_t X_t \, \dd t + \dd \beta_t.
\end{align}
Here, for each $t \in [0,T]$, $H_t \in \R^{D\times d}$ is a linear observation operator and $\beta$ is a $D$-dimensional Brownian motion on our probability space, independent of $W$ and $X$.


Assume that $Y$ was observed (up to time~$T$) and the conditional distribution (or smoothing distribution) of $X$ given $Y$ is of interest. In particular, we are interested in sampling paths of $X$ given $Y$ and computing conditional expectations with respect to the smoothing distribution,
\[
\EE\left[f(X) \mid Y\right],
\]
correctly up to Monte Carlo error. For suitable functions~$f$ this include for example quantities of interests such as the mean and variance of the unobserved process $X$ given the data.

The problem is intractable except in a few easy cases.
For example, when $b$ is linear in space and $\sigma$ is constant in space, we can use the Rauch--Tung--Striebel smoother, see \cite{rauch1965maximum}, to compute it.
Naively, smoothing an approximation of the process gives an approximation to the smoothing distribution.
But such direct approximations themselves are not suitable for Monte Carlo methods, as in general the conditional law and the approximation are (as measures) mutually singular, precluding the use of the approximation as proposal distribution in the Monte Carlo context when samples of the smoothing distribution are sought.   

As solution for this, we propose to use ``guided processes'' (\cite{ABFFG}) for sampling from the actual smoothing distribution. They are also derived from tractable approximations to the smoothing distribution,
for example via the Rauch--Tung--Striebel smoother of a linearized auxiliary system, albeit in a more indirect manner: 
The auxiliary process is only used to change the drift component of the process $X$ by a linear term, preventing the measures to become mutually singular. 

The process with changed drift approximates the conditional process $X$ given observations well and its samples can be used to sample the \emph{actual} conditional process by Monte Carlo, up to Monte Carlo error.

High frequency observations are for example relevant in automated stock trading, but also observations at 50-year intervals such as glacier data can constitute high frequency data as geological processes happen on time scales of million years.
Due to their large dimension, observation vectors in these applications are better modelled as functional observation data, and likewise the latent process as continuous time processes, see, e.g., \cite{sarkka2013}.

\subsection*{Method and results}

Here, we describe our method of computing expectations with respect to the smoothing distribution. We work on the complete filtered probability space $(\Omega, \cA, (\cG_t), \PP)$ supporting the independent Brownian motions $W$ and $\beta$ and consider the processes $X$ and $Y$ defined on the fixed time interval~$[0,T]$.

In section~\ref{sec:theory_guided}, using the theory of enlargement of filtrations, we introduce a filtration $\cH = (\cH_t)$ containing information about~$Y$. Under the new filtration, the meaning of ``adapted Brownian motion'' changes, which is reflected by a change in the drift of the process $X$.
In our first main theorem, we show that $X$ under $\PP$ solves the stochastic differential equation 
\begin{equation}\label{conditional}
    \dd X_t = b(t, X_t) \, \dd t + \sigma(t, X_t) (U_t \, \dd t +   \dd W^\star_t), \tag{$\star$}
\end{equation}
for a data dependent, $\cH$-adapted control process $U = (U_t, t \in [0,T])$  taking values in $\RR^{d'}$,
where $W^\star$ is again a $d'$-dimensional Brownian motion, but now also adapted to $\cH$ under $\PP$.
The theorem includes a characterisation of the control process $U$ which induces the dependence on $Y$, see \eqref{UV}.

Note that $X$ in \eqref{conditional} is still the same process under the same measure $\PP$, just expressed differently to make it possible to sample $X$ conditional having seen $Y$ already.
For this, note that the new Brownian motion $W^\star$ can be simulated in the filtration $\cH$ like any other Brownian motion and numerical solutions to \eqref{conditional} are obtained using standard integration procedures. 

The new representation makes it therefore straight forward to sample $X$ given $Y$ 
if one can get hold on the control process $U$ by solving \eqref{UV}.
However, this is not always practical, and we may settle for a suboptimal, but tractable data dependent control term $U^\circ$ to substitute~$U$. 
In section~\ref{sec:guided_next}, we define the guided process, which solves the SDE
\begin{equation}
    \dd X_t = b(t, X_t) \, \dd t + \sigma(t, X_t) (U^\circ_t \, \dd t  + \dd W^\circ_t), \tag{$\circ$}\label{guided}
\end{equation}
where $W^\circ$ is again a $d'$-dimensional Brownian motion adapted to $\cH = (\cH_t)$ under a different probability measure we denote $\PP^\circ$. 

We choose $U^\circ$ such that $X$ given $Y$ can be easily sampled under $\PP^\circ$. 
For that, we approximate the SDE~\eqref{SDE} by a linear, and therefore tractable, process, for which the corresponding control process is known in closed form.
This generalizes the backward filtering forward guiding paradigm of \cite{Mider2021} to continuous time observation models. 

Under $\PP^\circ$, $X$ solves the smoothing problem only approximately. But we then derive the Girsanov change of measure which changes the control back to $U$, expressed as function which gives each sample $X$ of \eqref{guided} a weight $\Psi(X)$.
This is our second main theorem.

In line with the theory of sequential Monte Carlo, the change of measure is tractable up to an unknown constant. 
This allows us to use Monte Carlo methods to sample the actual conditional process.
We devise a Markov Chain Monte Carlo sampler using solutions to $\eqref{guided}$ as proposals which produces dependent samples of $X \mid Y$ when it has reached stationary.

Ideally, we choose $U^\circ$ such that $\PP^\circ$ is close to~$\PP$. 
To do so in a principled manner, we parametrise $U^\circ_\theta$ and $\PP^\circ_\theta$ and vary $\theta$ as to minimise the Kullback--Leibler divergence between $\PP$ and $\PP^\circ_\theta$.  This variational problem can be formulated as optimal control problem similar to those considered in \cite{KimMehta}.

\subsection*{Notation}
We use $\P_X$ with a subscript to denote the law of a random process $X$ (i.e.~the push forward of $\PP$ under $X$). We denote the conditional distribution of $X$ given $Y$ as $\P_{X \mid Y}$. 
Expectations under a decorated measure, e.g.~under $\PP^\circ$, are indicated by a decorated expectation $\EE^\circ$, likewise the law of $X$ is denoted $\P^\circ_X$, etc.  We also denote the integral  $\int_0^\cdot Y_t \, \dd X_t$ by  $Y \bigdot X$. The processes $[X,Y]$ and $[X]$ denote the quadratic (co-)variation process with coordinate processes $[X,Y]^{(i,j)} = [X^{(i)}, X^{(j)}]$ and
$\cE(X)_\cdot = \exp\left(X_\cdot - \frac12[X]_\cdot\right)$ the Dol\'eans--Dade exponential process, up to a scaling factor the unique solution $Z$ of $\dd Z_t = Z_t \, \dd X_t$, $Z_0 = 1$. By $\sigma(Y)$ respective $\sigma(\cC)$, we denote the $\sigma$-field generated by some random variable~$Y$ or by the sets in a collection $\cC$, and we let $\cF \vee \cG := \sigma(\cF \bigcup \cG)$.
 
\subsection*{Related work}
The problem of smoothing and filtering for the model \eqref{SDE} and \eqref{obs} is well-studied. The exact smoothing distribution as well as the filtering distribution are in general unknown, although it can be found in the Gaussian affine case, see \cite{Kalman}, \cite{KalmanBucy}, \cite{rauch1965maximum}. 
In general, indirect observation of diffusion processes causes changes in the underlying probability measure which are described by Girsanov's theorem. These changes, as done in this work, can be alternatively, but to large extend equivalently, seen as changes in the information flow that can be modelled via enlargement of filtrations, see \cite{JeulinYor}.

Alternatively to both approaches, the Girsanov framework or the framework of enlargement of filtrations, conditional processes can be obtained as solutions to optimal control problems under Kullback--Leibler loss. In this direction, the conditional process can be derived as optimal controlled process, see \cite{KimMehta}, 
with
\begin{equation*}
U_t = \sigma'(t, X_t)\nabla \log v(t, X_t),
\end{equation*}
being the optimal control defined in terms of the solution $v$ of a backward stochastic partial differential equation derived in \cite{Pardoux}. The use of stochastic partial differential equations brings additional complexity which we can avoid in the main line of our work. Nevertheless, we will explore the connection in more detail.

Earlier, similar approaches based on guided processes have been used in the case of discrete-time observation processes, see \cite{papaspiliopoulos} and \cite{Mider2021}.These approaches are based on work on bridge processes derived in \cite{DelyonHu2006} and \cite{10.3150/16-BEJ833}, which generalise the Brownian bridge. Here, we extend the theory to continuous time observations. Also in this line of work, one can choose between the perspectives of changes of measures and of enlargement of filtrations, see \cite[p. 125]{Schauer2015-as} and \cite{marchand:tel-00733301}.

As reference literature for the smoothing and filtering problems, we use \cite{Bain2009}, \cite{Oksendal2003}, \cite{sarkka2013} and \cite{Handel}. For filtering techniques using machine learning or deep learning methods, see \cite{Han2018}, \cite{Bagmark}. For related work on optimal control, see e.g.~\cite{TzenRaginsky}.

\subsection*{Outline}
This paper is organized as follows: In section~\ref{sec:theory_guided}, we derive a stochastic differential equation for the smoothed process and relate it to a backward stochastic partial differential equation for which we give a solution in the case of linear processes.
In section~\ref{sec:guided_next}, we define the guided proposal process, derive the change of measure between the conditional process and the proposal process, and illustrate how to use this in sampling and in variational inference. The last section~\ref{sec:simulations} is devoted to a numerical experiment. 
The proofs of theorems are included in the main text, remaining proofs are gathered in appendix~\ref{app:proofs}.

\section{Smoothing for continuously observed diffusions}\label{sec:theory_guided}

In this section, we derive a stochastic differential equation for the smoothed process using the approach of \emph{initial enlargement of filtration} (grossissement de filtration initiale), see \cite{JeulinYor}.
Let $\cF = (\cF_t, t \in [0,T])$ (recall that $\cG$ contains information about both Brownian motions $W$ and $\beta$) be the complete and right continuous filtration created by~$W$.
The enlarged filtration $\cH = (\cH_s)$, also complete and right continuous,
\[
\cH_s = \bigcap_{t > s} (\cF_t \vee \sigma(Y)),
\]
models the flow of information when the entire realization~$Y$ is made known from the beginning of time. Now the process seen as semimartingale under the new filtration will have a new canonical decomposition which we derive in this section. 

For the simpler case where $Y = X_T$, the process~$X$  
under the filtration $\cH$ (with $\sigma(Y)= \sigma(X_T)$ accordingly), 
fulfills equation~\eqref{conditional} with
\begin{equation}\label{discrete-obs}
U_t = \nabla_x \log v(t, X_t; T, X_T),
\end{equation}
where $v(t, x; T, y) =  \PP(X_T \in {\dd y} \mid X_t = x)\big/\dd y$ is the transition density, that is the likelihood to still observe $Y = X_T$ when the process is in $x$ at time~$t$.

In the case of continuous observations considered here we can express $U$ similarly in terms of a density of the conditional distribution of~$Y$ of an event~$A$,
\begin{equation*}
\P_{Y \mid \cF_t}(A) = 
\EE [\ind_A(Y) \mid \cF_t],
\end{equation*}
which is derived in the next section using a decoupling change of measure.

\subsection{Decoupling change of measure for the joint process}

We introduce the progressive log-likelihood process $\varphi = (\varphi_t, t\in [0,T])$ taking values in $\RR$ with dynamics governed by the SDE
\begin{align*}
    \dd \varphi_t = (H_t X_t)' \, \dd Y_t - \frac12 \|H_t X_t\|^2  \, \dd t
\end{align*}
and initial value $\phi_0 = 0$.  Exponentiated it gives the likelihood of seeing a sample $(Y_s, s \in [0,t])$ given $(X_s, s \in [0,t])$ so far relative to the Wiener law, so $\phi = \log \cE( (H X)' \bigdot Y)$. 

\begin{assumption}\label{weak-ass}
Assume that the drift vector~$b$ and the dispersion matrix~$\sigma$ are measurably defined on $[0, T]\times \RR^d$, with $\sigma$ bounded and that the observation matrix $H$ is bounded on $[0,T]$ and measurable. Assume that the law of the solution to \eqref{SDE} is unique. \end{assumption}

 \begin{assumption}[Novikov]\label{novikov-ass}
    There is $\epsilon > 0$ such that $\EE \exp(\epsilon \sup_{t \in [0,T]} \| X_t\|^2 ) < \infty$.
 \end{assumption}
Under this local Novikov-type assumption, compare \cite[Exercise 1.40, p.~338]{Revuz1998-hw}, using $\varphi$ as Radon--Nikodym density, we can define a new, equivalent measure
\begin{equation*}
\dd \tilde{\PP} = \e^{-\varphi_T} \, \dd \PP,
\end{equation*}
and the process $\cE( (H X)' \bigdot Y)$ is an $\cF$-adapted martingale under $\tilde \PP$.

Under this new measure, the process~$X$ will be decoupled from the observation process~$Y$ as this is the change of measure which removes the drift term $H_tX_t \, \dd t$ from~$Y$. But the marginal law $\P_X$ of $X$ remains the same, for example $\EE [f(X)] = \tilde \EE [f(X)]$ for path functionals~$f$.

Then, under the new measure, by Girsanov's theorem,
$(W_t, \tilde \beta_t) := (W_t, \beta_t + \int_0^t H_s X_s \, \dd s) = (W_t, Y_t)$ is an $\RR^{d'+ D}$-dimensional Brownian motion and $X$ is independent of $Y$. Hence,
\[
 \tilde \P_{{X},Y} =  \P_{{X}} \otimes \tilde \P_{ Y}, 
\]
where $\tilde \P_{Y}$ is the Wiener measure restricted to $[0, T]$ and a tilde indicates that we consider the push forward under $\tilde \PP$. 
As in \cite[p.~150, (1.6)]{Pardoux}, we have the following characterization of the conditional expectation given $Y$ in form of a Kallianpur--Striebel type formula. 
\begin{proposition}
For bounded measurable real test functions $f$ on the path space $C([0,T], \R^d)$, it holds
\begin{equation*}
\EE \left[f(X) \mid Y\right] = \tilde \EE\left [f(X) Z_T
\mid Y\right],
\quad \text{ where } Z_t = \frac{ \e^{\varphi_t}}{\tilde \EE[ \e^{\varphi_t} \mid Y]}. 
\end{equation*}
\end{proposition}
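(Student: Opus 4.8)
The plan is to establish the Kallianpur--Striebel formula by a conditioning argument under the decoupling measure $\tilde\PP$, using the fact (stated above) that under $\tilde\PP$ the processes $X$ and $Y$ are independent with $\tilde\P_{X,Y} = \P_X \otimes \tilde\P_Y$. First I would apply the abstract Bayes formula for conditional expectations under a change of measure: since $\dd\PP = \e^{\varphi_T}\,\dd\tilde\PP$ (the inverse of the relation $\dd\tilde\PP = \e^{-\varphi_T}\,\dd\PP$), for any bounded measurable $f$ on path space,
\begin{equation*}
\EE[f(X)\mid Y] = \frac{\tilde\EE[f(X)\,\e^{\varphi_T}\mid Y]}{\tilde\EE[\e^{\varphi_T}\mid Y]}.
\end{equation*}
This is the standard conditional version of the Radon--Nikodym/Bayes identity; I would cite the usual reference (e.g. \cite{Bain2009} or \cite{Oksendal2003}) for it, noting that integrability of $f(X)\e^{\varphi_T}$ is guaranteed by boundedness of $f$ together with Assumption~\ref{novikov-ass}, which ensures $\e^{\varphi_T} \in L^1(\tilde\PP)$ (indeed $\cE((HX)'\bigdot Y)$ is a true $\tilde\PP$-martingale).

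Next I would rewrite the right-hand side in terms of $Z_t$. Define $Z_t = \e^{\varphi_t}/\tilde\EE[\e^{\varphi_t}\mid Y]$ as in the statement. The key point is that the denominator $\tilde\EE[\e^{\varphi_T}\mid Y]$ appearing in the Bayes formula is exactly the normalizing constant in $Z_T$, so
\begin{equation*}
\EE[f(X)\mid Y] = \tilde\EE[f(X)\,Z_T\mid Y],
\end{equation*}
provided we can move the ($Y$-measurable, hence $\sigma(Y)$-measurable) random denominator inside the conditional expectation. That is immediate since $\tilde\EE[\e^{\varphi_T}\mid Y]$ is $\sigma(Y)$-measurable: $\tilde\EE[f(X)\e^{\varphi_T}\mid Y]/\tilde\EE[\e^{\varphi_T}\mid Y] = \tilde\EE[f(X)\e^{\varphi_T}/\tilde\EE[\e^{\varphi_T}\mid Y]\mid Y] = \tilde\EE[f(X)Z_T\mid Y]$.

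The one genuine subtlety — which I expect to be the main point to get right rather than a serious obstacle — is the well-definedness and positivity of $Z_t$: one must check $\tilde\EE[\e^{\varphi_t}\mid Y] \in (0,\infty)$ $\tilde\PP$-a.s. Finiteness follows from $\tilde\EE[\e^{\varphi_t}] = 1$ (the martingale property of $\cE((HX)'\bigdot Y)$ under $\tilde\PP$, which holds by Novikov's condition given Assumption~\ref{novikov-ass} and boundedness of $H$ in Assumption~\ref{weak-ass}), so the conditional expectation is a.s.\ finite; positivity is clear since $\e^{\varphi_t} > 0$. One should also note that $Z$ can alternatively be written using the independence structure: $\tilde\EE[\e^{\varphi_T}\mid Y] = \int \e^{\varphi_T(\omega_X, Y)}\,\P_X(\dd\omega_X)$, an explicit integral over an independent copy of $X$, which makes the Kallianpur--Striebel interpretation transparent and connects to the formulation in \cite[p.~150, (1.6)]{Pardoux}. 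Beyond that, everything is a routine application of conditional-expectation properties (tower property, pulling out $\sigma(Y)$-measurable factors), so the proof is short.
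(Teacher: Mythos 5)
Your argument is correct: the identity is exactly the abstract Bayes (Kallianpur--Striebel) formula for the change of measure $\dd \PP = \e^{\varphi_T}\,\dd\tilde\PP$, followed by pulling the $\sigma(Y)$-measurable, a.s.\ finite and positive normaliser inside the conditional expectation, and you handle the well-definedness points (integrability via the martingale property of $\cE((HX)'\bigdot Y)$ under $\tilde\PP$, positivity of the conditional expectation of a positive variable) appropriately. The paper gives no in-text proof and simply refers to \cite[p.~150, (1.6)]{Pardoux}, and your derivation is the standard argument behind that cited formula, so it is essentially the same approach.
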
 

\subsection{The dynamics of the conditional process}

The process $V = (V_t, t \in [0,T])$, given by
\[V_t = \tilde \EE\left[ \exp(\phi_T - \phi_t) \mid \cH_t \right],\]
can be understood as likelihood of the future part $(Y_r, t \le r \le T)$ of the observation $Y$ given $X_t$ and plays a similar role as the transition density $q(t, X_t, T, y)$ in the case of observing $X_T = y$.  
The dynamics of the process $X$ under the new filtration are governed by the \emph{score process}, an $\cH$-adapted process $U = (U_t, t \in [0,T]) $ such that
\begin{equation}\label{UV}
\int_0^t U_s V_s \, \dd s = [W, V]_t.
\end{equation}

\begin{theorem}\label{thm:yor}
Under assumptions~\ref{weak-ass} and~\ref{novikov-ass}, there exists a predictable integrable process~$U$ such that \eqref{UV} holds and 
\[
W^\star_t = W_t - \int_0^t U_s \, \dd s
\]
is an $\cH$-Brownian motion under $\PP$. The process $X$ from \eqref{SDE} solves~\eqref{conditional} with $U$ and $W^\star$.
\end{theorem}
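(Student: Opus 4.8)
The plan is to carry out the enlargement of filtration under the decoupling measure $\tilde\PP$ of the previous subsection, and then to transport the result back to $\PP$ by Girsanov's theorem. Under $\tilde\PP$ the pair $(W,Y)$ is a $(d'+D)$-dimensional Brownian motion, so $\sigma(Y)$ is independent of $\cF_T=\sigma(W_s:s\le T)$, and $\cH$ is precisely the initial enlargement of $\cF$ by the $\cF_T$-independent $\sigma$-field $\sigma(Y)$. Two standard consequences of such an enlargement will be used: first, $W$ is still an $\cH$-Brownian motion under $\tilde\PP$; second, $\cH$ retains the predictable representation property with respect to $W$ under $\tilde\PP$, so that every $\cH$-local martingale under $\tilde\PP$ equals a constant plus a stochastic integral $Z\bigdot W$ with $\cH$-predictable integrand.

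Next I identify the $\cH$-density process of $\PP$ with respect to $\tilde\PP$ and read off $U$ from it. Since $X$ restricted to $[0,t]$ is $\cF_t$-measurable and the increments of $Y$ are $\sigma(Y)$-measurable, the stochastic integral $(HX)'\bigdot Y$ at time $t$ is $\cH_t$-measurable, hence so is $\e^{\varphi_t}=\cE\bigl((HX)'\bigdot Y\bigr)_t$; together with $\dd\tilde\PP=\e^{-\varphi_T}\dd\PP$ this shows the density process is
\[
M_t:=\tilde\EE\bigl[\e^{\varphi_T}\mid\cH_t\bigr]=\e^{\varphi_t}\,\tilde\EE\bigl[\e^{\varphi_T-\varphi_t}\mid\cH_t\bigr]=\e^{\varphi_t}V_t,
\]
a strictly positive $\cH$-martingale under $\tilde\PP$ (a true martingale, being a conditional expectation of the integrable variable $\e^{\varphi_T}$, with $\tilde\EE\,\e^{\varphi_T}=1$). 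Applying the representation property to $M$ gives $M=M_0+Z\bigdot W$ with $Z$ an $\cH$-predictable row vector, $\int_0^T\|Z_s\|^2\,\dd s<\infty$ $\tilde\PP$-a.s.; I then set $U_s:=Z_s'/M_s$, which is $\cH$-predictable, and $\int_0^T\|U_s\|\,\dd s<\infty$ a.s.\ after localizing so that $M$ and $M^{-1}$ are bounded.

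To verify \eqref{UV}, observe that $\e^{\pm\varphi}$ solves $\dd\e^{\pm\varphi_t}=\pm\e^{\pm\varphi_t}(H_tX_t)'\dd Y_t+(\text{finite variation})$, and $[W,Y]=0$ under $\tilde\PP$, so $[W,\e^{\pm\varphi}]=0$; hence $V=\e^{-\varphi}M$ obeys $\dd[W,V]_s=\e^{-\varphi_s}\,\dd[W,M]_s=\e^{-\varphi_s}Z_s'\,\dd s=V_sU_s\,\dd s$, which is exactly \eqref{UV}. Finally, applying Girsanov's theorem to the continuous $\cH$-local martingale $W$ under $\tilde\PP$ with density process $M$, the process
\[
W^\star_t=W_t-\int_0^t\frac{1}{M_s}\,\dd[W,M]_s=W_t-\int_0^t U_s\,\dd s
\]
is a continuous $\cH$-local martingale under $\PP$; since $[W^\star]_t=[W]_t$ is unchanged by the equivalent measure change and the finite-variation shift, Lévy's characterization shows $W^\star$ is an $\cH$-Brownian motion under $\PP$. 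Substituting $\dd W_t=\dd W^\star_t+U_t\,\dd t$ into \eqref{SDE} gives \eqref{conditional}.

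The substantive work lies in the two ``standard facts'' quoted above --- especially that the initial enlargement by $\sigma(Y)$ preserves the predictable representation property of $\cH$ with respect to $W$ under $\tilde\PP$, which has to be argued with care because $\cH$ is completed and right continuous --- and in the integrability of $U=Z'/M$, i.e.\ controlling $Z_s$ on the set where $M_s=\e^{\varphi_s}V_s$ is small; the latter is handled by localization once square integrability of $Z$ on $[0,T]$ is established. A further point, needed to give meaning to $[W,V]$ in \eqref{UV}, is that $V=\e^{-\varphi}M$ is a genuine $\cH$-semimartingale under $\tilde\PP$; this follows from $M$ being an $\cH$-martingale and $\e^{\varphi}$ having vanishing covariation with $W$, as used above.
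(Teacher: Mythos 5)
Your proof is correct and follows essentially the same route as the paper: both work under the decoupling measure $\tilde\PP$, apply the martingale representation theorem in the initially enlarged filtration to the strictly positive density process $M_t=\e^{\varphi_t}V_t$ (the paper's $p_t=\Phi_t V_t$), define $U$ as the representation integrand divided by the density, and verify \eqref{UV} using $[W,\e^{\pm\varphi}]=0$. The only cosmetic difference is the final step, where you invoke the general Girsanov theorem for the change $\tilde\PP\to\PP$ with density process $M$, while the paper re-derives that drift correction by hand (integration by parts showing $W^\star p$ is an $\cH$-martingale under $\tilde\PP$, then the abstract Bayes formula) before applying L\'evy's characterization.
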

\begin{proof}
Define $p_t = \Phi_t V_t = \tilde\EE[\Phi_T \mid \cH_t]$ with $\Phi_t =  \exp(\phi_t)$. The process~$p$ is an $\cH$-martingale under~$\tilde \PP$.  
From the martingale representation theorem, which holds under initial enlargement of $\cF$ by the $\tilde \PP$-independent $\sigma(Y)$, see \cite[theorem~III.4.33]{Jacod2002-ur}, 
there is a square integrable stochastic process~$\Delta$ with
\[
p_t = p_0 + \int_0^t \Delta_s \, \dd W_s.
\]
Also $[W, p]$ is absolutely continuous.
Therefore, as $p > 0$,
\[
U_t = p^{-1}_t \Delta_t  = p_{t}^{-1}\pder t [W, p]_t
\] 
solves $p = p_0 \cE(U'\bigdot W)$
by $p^{-1} \bigdot p = (p^{-1} \Delta) \bigdot W = U' \bigdot W$
and
\begin{equation}\label{Up}
\int_0^t U_s p_s \, \dd s = [W, p]_t.
\end{equation}
Integration by parts yields
\[
W p = p \bigdot W +  W \bigdot p + [W, p],
\]
and with $A_t =  \int_0^t U_s \, \dd s$ by \eqref{Up}
\[
A p =  A \bigdot  p  + p \bigdot A =  A \bigdot  p + [W, p] .
\]
Also $W$ is an $\cH$-martingale under $\tilde \PP$. So, 
\[
W^\star p = (W - A)p = (Wp - [W, p]) - (Ap - [W, p])
\]
is an $\cH$-martingale under $\tilde \PP$ as difference of two $\cH$-martingales. 
Then by (abstract) Bayes' formula
\[
\EE\left[W^\star_t   \xmiddle| \cH_s\right] = \tilde \EE\left[W^\star_t  p_t   \xmiddle| \cH_s\right]/p_s = W^\star_s. 
\]
As $[W^\star]_t = t$, by L\'evy's characterization, $W^\star$ is an $\cH$-Brownian motion under~$\PP$.  
The SDE~\eqref{conditional} follows by substituting $\dd W^\star_t + U_t\,\dd t$ for $\dd W_t$ in \eqref{SDE}.

Finally, by $[\Phi, W] = 0$ 
we have from integration by parts $[W, p]  = [W, \Phi V] 
= [W, \Phi\bigdot V + V \bigdot \Phi + [V, \Phi]]
= \Phi\bigdot  [W,  V ].$
We see that $U$ 
also solves \eqref{UV}  by $\int_0^\cdot  U_s V_s \, \dd s =  \Phi^{-1}\bigdot [W, p]  =  (\Phi^{-1}\Phi)\bigdot [W,  V ] =  [W,  V ].$
\end{proof}

The term~$U$ guides the process closer to the observations~$Y$ and plays a similar role as the corresponding term in case of a single discrete observation in~\eqref{discrete-obs}. We derive a corollary  
that further helps to gain intuitive understanding of $U$ and will help to compute $U$ in the linear Gaussian case later.
\begin{corollary}\label{parametrized}
Let $V$ be parametrised by an $r$-dimensional It\^o-process  $\Theta = (\Theta_t, t \in [0,T])$, independent of $X$,  in the sense that there is a function 
$v\colon [0,T]\times \RR^d \times \RR^r \to [0,\infty)$, continuously differentiable in space,  
such that 
\[
v(t, X_t, \Theta_t) = V_t.
\]
Then
\[
U_t =\sigma'(t, X_t) \nabla_x \log v(t, X_t, \Theta_t).
\]
\end{corollary}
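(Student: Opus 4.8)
The strategy is to compute the bracket $[W, V]_t$ appearing in the defining relation \eqref{UV} by using the chain rule (Itô's formula) on $V_t = v(t, X_t, \Theta_t)$, and to read off $U$ from the resulting absolutely continuous part. Since $[W,V]$ only depends on the martingale part of $V$ that is correlated with $W$, and $W$ drives only $X$ (not $\Theta$, which is independent of $X$), only the $\nabla_x v$ term will contribute.

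\textbf{Step 1 (Itô expansion of $V$).} Apply Itô's formula to $v(t, X_t, \Theta_t)$. Writing $X$ via \eqref{SDE} with $\dd X_t = b\,\dd t + \sigma\,\dd W_t$ and $\Theta$ as a generic Itô process with some drift and martingale part $\dd M^\Theta_t$, the differential $\dd V_t = \dd v(t, X_t, \Theta_t)$ has a finite-variation part (time derivative, first- and second-order space/parameter terms) plus a martingale part
\[
\nabla_x v(t, X_t, \Theta_t)' \,\sigma(t, X_t)\,\dd W_t \;+\; \nabla_\theta v(t, X_t, \Theta_t)'\,\dd M^\Theta_t .
\]
Here one needs $v$ to be $C^{1,2,\dots}$ in the relevant arguments for Itô to apply; the statement only assumes $C^1$ in space, so strictly one either invokes that $V$ is already known (from Theorem~\ref{thm:yor}) to be an Itô process with an absolutely continuous bracket with $W$ and uses a mollification/localisation argument, or one simply records the bracket computation formally and notes it is justified under the smoothness present in the linear Gaussian application where the corollary is used.

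\textbf{Step 2 (Compute $[W, V]$).} Since $W$ is a Brownian motion and $\Theta$ (hence $M^\Theta$) is independent of $X$ and of $W$, the cross bracket $[W, M^\Theta] = 0$. Only the $\sigma\,\dd W$ term contributes, giving
\[
\dd [W, V]_t = \sigma(t, X_t)' \,\nabla_x v(t, X_t, \Theta_t)\,\dd t .
\]
Thus $[W,V]_t = \int_0^t \sigma(s, X_s)' \nabla_x v(s, X_s, \Theta_s)\,\dd s$.

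\textbf{Step 3 (Identify $U$).} Comparing with \eqref{UV}, $\int_0^t U_s V_s \,\dd s = \int_0^t \sigma(s,X_s)' \nabla_x v(s,X_s,\Theta_s)\,\dd s$, so pathwise (for a.e.\ $s$) $U_s V_s = \sigma(s,X_s)'\nabla_x v(s,X_s,\Theta_s)$. Since $V_s = v(s,X_s,\Theta_s) > 0$ (it is strictly positive as in the proof of Theorem~\ref{thm:yor}, where $p>0$ was used), dividing gives
\[
U_s = \sigma(s,X_s)'\,\frac{\nabla_x v(s,X_s,\Theta_s)}{v(s,X_s,\Theta_s)} = \sigma'(s, X_s)\,\nabla_x \log v(s, X_s, \Theta_s),
\]
using $\nabla_x \log v = \nabla_x v / v$. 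Uniqueness of $U$ satisfying \eqref{UV} (as a predictable process, up to $\dd t \otimes \dd\PP$-null sets) then finishes the identification.

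\textbf{Main obstacle.} The only delicate point is the regularity needed to differentiate $V_t = v(t, X_t, \Theta_t)$ by Itô's formula: the corollary hypothesises only continuous differentiability in space, not the $C^2$ (in $x$) and joint regularity that the naive application of Itô requires. I would handle this either by strengthening the hypothesis to what is actually used downstream (in the linear Gaussian case $v$ is smooth), or by arguing more carefully that it suffices that $v(\cdot, \cdot, \Theta_\cdot)$ be a semimartingale whose bracket with $W$ is absolutely continuous with the stated density — which is what $[W,V]$ being absolutely continuous (already established in Theorem~\ref{thm:yor}) plus the independence of $\Theta$ and $W$ delivers, without needing second-order space derivatives of $v$. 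Everything else is a routine bracket computation.
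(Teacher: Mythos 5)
Your computation is in substance the same as the paper's: the covariation of $V$ with $W$ picks up only the $\sigma\,\dd W$ part of $X$ because $\Theta$ is independent of $X$ (and hence of $W$), giving $\dd[W,V]_t=\sigma'(t,X_t)\nabla_x v(t,X_t,\Theta_t)\,\dd t$, after which positivity of $V$ and \eqref{UV} (equivalently \eqref{Up}, since $[W,p]=\Phi\bigdot [W,V]$) yield $U_t=\sigma'(t,X_t)\nabla_x\log v(t,X_t,\Theta_t)$. The one place where your argument has a genuine gap is precisely the point you flag: you obtain the bracket identity from It\^o's formula, which requires $v$ to be $C^2$ in $x$, whereas the corollary assumes only continuous differentiability. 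Your proposed remedies do not quite close this: strengthening the hypothesis changes the statement, and the suggestion that absolute continuity of $[W,V]$ (from theorem~\ref{thm:yor}) plus independence already ``delivers'' the stated density is circular as written --- absolute continuity tells you a density exists, but not that it equals $\sigma'\nabla_x v$. The paper closes the gap with a single tool, Clark's formula \cite[IV.41]{RogersWilliams2}: for $f$ merely $C^1$ in space one still has the covariation chain rule $[f(\cdot,X_\cdot,\Theta_\cdot),W]=\int \nabla_x f(s,X_s,\Theta_s)'\,\dd[X,W]_s$, with the $\Theta$-dependence contributing nothing by independence; applied to $p=\Phi V$ (using $[\Phi,W]=0$) this gives exactly your Step~2 without second derivatives or mollification, and your Step~3 then finishes the proof under the corollary's stated assumption.
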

So $(\sigma')^{-1} U$ is the direction where the likelihood $V$ increases,  
$\sigma U$ acts as a drift in the corresponding direction in coordinates of $X$, see \cite{corstanje2024}.

\subsection{Connection to stochastic PDEs}
Without the assumption that $V$ can be parametrised by a finite dimensional process, 
$U$ is correspondingly determined by the following backward stochastic partial differential equation (PDE), due to \cite{Pardoux, Pardoux1982, HEUNIS1990185}, in particular \cite[Sec.~5]{Pardoux1982} for the case of unbounded coefficients.
Define the formal differential operator
\begin{align*}
    \fL f(t,x) 
        & = 
        \langle b(t,x), \nabla f(t,x)\rangle + \frac12 \operatorname{Tr}(a(t,x) \operatorname{Hess}_f(t,x))\\
        & = 
        \sum_{i=1}^d b_i(t,x) \pder{x_i}  f(t,x)  + \frac12 \sum_{i, j =1}^d a_{ij}(t,x) \frac{\partial^2}{\partial x_i \partial x_j}  f(t,x)
\end{align*}
associated with $X$. 

One possible way to make practical use of theorem \ref{thm:yor} is using these results to compute $U$.
\begin{proposition}
Under the additional conditions
that $b$ is once and $\sigma$ and $\sigma \sigma'$ are twice bounded differentiable in space, $\sigma \sigma'$ is uniformly elliptic, and $H$ is bounded differentiable in time,
 \[
v(t, x) = \tilde\EE\left[\frac{\Phi_T}{\Phi_t}\middle| X_t = x, Y\right]
\]
 satisfies the stochastic PDE 
\begin{equation}\label{parabolic2}
\dd v(t, \cdot)    + \fL v(t, \cdot) \, \dd t + v(t, \cdot) (H_t  \cdot) '\diamond  \dd Y_t  = 0
\end{equation}
with terminal condition $v(T,\cdot)\equiv 1$,
where $\diamond$ indicates a backward It\^o integral.
\end{proposition}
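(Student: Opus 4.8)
The plan is to recognise $v$ as a Feynman--Kac value function by conditioning on the observation path. Since $\Phi_t = \cE((HX)'\bigdot Y)_t$ we have $\Phi_T/\Phi_t = \exp(\int_t^T (H_sX_s)'\,\dd Y_s - \tfrac12\int_t^T\|H_sX_s\|^2\,\dd s)$, and under $\tilde\PP$ the process $X$ solves \eqref{SDE} driven by the $\tilde\PP$-Brownian motion $W$, which is independent of $Y=\tilde\beta$; Assumption~\ref{novikov-ass} provides the integrability needed for $v$ to be finite. Conditioning on $\sigma(Y)$ and using the Markov property of $X$ under $\tilde\PP$, the quantity $v(t,x)$ equals the value at $(t,x)$ of a terminal-value problem for the diffusion with generator $\fL$, run on $[t,T]$ with the observation path frozen; the terminal condition $v(T,\cdot)\equiv 1$ is immediate from $\Phi_T/\Phi_T = 1$. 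What remains is to identify the equation satisfied by this value function.

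The obstruction to differentiating $v(t,X_t)$ by the forward It\^o calculus is that $v(t,\cdot)$ depends on the increments of $Y$ after time $t$, hence is not adapted to the past of $Y$. I would remove this anticipation by a ``robust'' transformation. Using that $H$ is differentiable in time, integration by parts turns $\int_t^T (H_sX_s)'\,\dd Y_s$ into a boundary term, a $\dd s$-term, and a stochastic integral in $\dd W$; writing $v(t,x) = \exp(-(H_tx)'Y_t)\,w(t,x)$ then exhibits $w(t,x)$ as the expectation of a multiplicative functional of $X$ started at $(t,x)$, with smooth terminal cost $x\mapsto\exp((H_Tx)'Y_T)$, a running cost that depends on the frozen path $Y$ only through locally bounded coefficients, and a $\dd W$-term. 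A Girsanov change of measure --- admissible by boundedness of $\sigma$ and $H$ together with Assumption~\ref{novikov-ass}, after a routine localisation --- absorbs the $\dd W$-term into the drift, replacing $b$ by $b - aH'Y$ (here $a=\sigma\sigma'$ being twice differentiable keeps the new drift once differentiable). One is then left with $w$ as the value function of a genuine, stochastic-integral-free, backward parabolic problem, and under uniform ellipticity of $a$ and the stated differentiability of $b$, $\sigma$, $a$, classical Feynman--Kac and Schauder estimates yield $w\in C^{1,2}$ with $\partial_t w$ continuous, solving a linear parabolic equation of the form $\partial_t w + \fL w + \langle \sigma\delta,\nabla w\rangle + \bar c\,w = 0$ for suitable path-dependent $\delta = -\sigma'H'Y$ and scalar $\bar c$; in particular $t\mapsto w(t,x)$ is $C^1$, hence of finite variation, even though $Y$ is not --- because $w$ solves the backward problem rather than an SDE.

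Transforming back via $v(t,x) = \exp(-(H_tx)'Y_t)\,w(t,x)$, I would apply the It\^o product rule in $t$: as $w(\cdot,x)$ has finite variation there is no cross-bracket, and the semimartingale differential of $\exp(-(H_tx)'Y_t)$ contributes $\exp(-(H_tx)'Y_t)(-(\dot H_tx)'Y_t\,\dd t - (H_tx)'\,\dd Y_t + \tfrac12\|H_tx\|^2\,\dd t)$, in which the stochastic integral against $Y$ is a priori a forward (anticipating) one, $w(\cdot,x)$ not being adapted to the past of $Y$. The natural, well-defined object here is instead the backward It\^o integral $\diamond\,\dd Y_t$ --- the one for which the composition $v(t,X_t)$ makes sense --- and re-expressing the forward integral as a backward integral introduces a further $\dd t$-correction. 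Collecting everything: using the equation for $w$, the Girsanov drift correction, the $\dot H$-terms, the quadratic terms, and this forward-to-backward correction all organise into $-\fL v(t,x)\,\dd t$ (a short, bookkeeping computation), while the remaining stochastic term is $-v(t,x)(H_tx)'\diamond\,\dd Y_t$; this is \eqref{parabolic2}. A cleaner alternative is to establish existence of a classical solution $\hat v$ of \eqref{parabolic2} directly via the same transformation, then verify by an It\^o--Wentzell computation that $\Phi_t\hat v(t,X_t)$ is a $\tilde\PP$-$\cH$-martingale with terminal value $\Phi_T$; comparison with $p_t = \tilde\EE[\Phi_T\mid\cH_t]$ from the proof of Theorem~\ref{thm:yor}, together with the full support of the law of $X_t$, then forces $\hat v = v$.

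I expect the genuine obstacle to be analytic rather than probabilistic: proving the $C^{1,2}$ regularity of $w$ --- equivalently, well-posedness of \eqref{parabolic2} --- when the terminal datum grows exponentially and the coefficients, though bounded where assumed, need not decay at infinity. This is exactly the situation treated in \cite[Sec.~5]{Pardoux1982}, and in \cite{Pardoux} and \cite{HEUNIS1990185}, on which I would rely for the hard estimates, recording only the translation of notation. A secondary, purely bookkeeping difficulty is to keep the forward-versus-backward It\^o distinction consistent through the transformation so that the stochastic integral in \eqref{parabolic2} comes out as a backward integral, as forced by the future-adaptedness of $v(t,\cdot)$.
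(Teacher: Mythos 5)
The paper does not prove this proposition at all: it is stated as a result ``due to'' \cite{Pardoux}, \cite{Pardoux1982} and \cite{HEUNIS1990185}, with the reader referred to \cite{HEUNIS1990185} for the meaning of the backward It\^o integral and of a solution, so there is no in-text argument for your sketch to diverge from. Your outline --- rewrite $\Phi_T/\Phi_t$ as an exponential functional, integrate the $\dd Y$ term by parts using the time-differentiability of $H$ (the ``robust'' transformation $v(t,x)=\exp(-(H_tx)'Y_t)\,w(t,x)$), absorb the resulting $\dd W$ integral by Girsanov so that $w$ becomes the value function of a pathwise parabolic problem with frozen observation path, invoke parabolic regularity for $w\in C^{1,2}$, and then either transform back with the forward-to-backward correction or verify directly that $\Phi_t\hat v(t,X_t)$ is a $\tilde\PP$-$\cH$-martingale and identify $\hat v$ with $v$ via $p_t=\Phi_tV_t$ from theorem~\ref{thm:yor} --- is precisely the robustification route taken in that literature (Heunis in particular works with the robust form), and you defer the genuinely hard growth and regularity estimates to exactly the sources the paper itself leans on. So your proposal is consistent with, and somewhat more explicit than, what the paper does; the only caveat is that the ``short bookkeeping computation'' reassembling the $\dot H$, Girsanov, quadratic and forward-versus-backward correction terms into $-\fL v\,\dd t - v\,(H_t\cdot)'\diamond\dd Y_t$ is asserted rather than carried out, and in the martingale-verification alternative the identification $\hat v=v$ needs the conditional Markov property of $X$ given $Y$ under $\tilde\PP$ (available by independence) spelled out --- but neither point is a gap relative to the paper, which proves nothing here.
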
 
We refer to \cite{HEUNIS1990185} for further motivation, for the notion of a backward It\^o integral and for what entails to be a solution in this context. 
The stochastic term reads in integral form
\[\int_0^t  G(v(s, \cdot), (H_s \cdot))' \diamond \dd Y_s, \]
where for $c \colon \RR^d\to \RR$, $g \colon \RR^d\to \RR^D$, we have
$G(c, g)(x) = c(x) g(x)$.
\begin{remark}\label{terminal} When additional to $Y$ also a random variable $\zeta$ that depends (only) on $X_T$ is noisily observed, then the terminal condition changes to $v(T,\cdot) = L(x; \zeta)$, where $L(x; y)$ is the conditional probability density of $\zeta$ given $X_t = x$ (the likelihood of $x$). Then $V_T = L(X_T; \zeta)$ and
\[
v(t, x) = \tilde\EE\left[\frac{\Phi_T}{\Phi_t}\,L(X_T; \zeta)\middle| X_t = x, Y, \zeta\right].
\]
Thus one can easily extend our results to the setting where both continuous and discrete observations are available, extending \cite{Mider2021} and \cite{ABFFG} by this setting.
\end{remark}

\begin{remark}\label{palmowski}
From \eqref{parabolic2} by It\^o's formula (\cite[p.~133]{Pardoux}),
\begin{equation*}
\dd \log v(t, \cdot) +  \left[ \fL \log v(t, \cdot) + \frac12 \|\sigma' \nabla \log v(t, \cdot)\|^2\right] \, \dd t + (H_t \cdot)' \, \dd Y_t - \frac12\|(H_t \cdot)\|^2 \, \dd t    = 0.
\end{equation*}
Details on the derivation can be found in appendix~\ref{app:proofs}.
\end{remark}

\subsection{Backward filter for the likelihood process in the linear case}
Consider a linear process \eqref{SDE} with $b(t,x) = B_tx + m_t$ and $\sigma(t,x) = \sigma_t$  constant in space, where $\sigma_t \in \R^{d \times d'}$, $B_t \in \R^{d \times d}$, and $ m_t \in \R^d$. 
In line with \cite{FraserPotter1969}, the solution of the backward stochastic PDE \eqref{parabolic2} can be alternatively obtained by a backward filter. 
Introduce
\[
\fL^\dagger f(t,x) = -\sum_{i=1}^d b_i(t,x) \pder{x_i}  f(t,x)  + \frac12 \sum_{i, j =1} a_{ij}(t,x) \frac{\partial^2}{\partial x_i \partial x_j}  f(t,x)
\]
and 
let
\[
c(t)  =  -\operatorname{Tr}(B_t).
\]

The operator $(-\pder t + \fL^\dagger) f + cf$ is the formal adjoint of $\pder t + \fL$.
Now one can think of $-\pder t + \fL^\dagger$, where we split off the bounded multiplicative operator $c$, 
as the space-time generator of a linear process $X^\dagger_{t}$ running backwards in time starting from a Gaussian density $\pi^\dagger(T, \cdot)$. 

Using classical filtering theory for the process $X^\dagger_{T-r}$, we can derive the Kushner--Stratonovich equation for the proper filtering density
\[
\pi^\dagger(s, x) = \PP(X^\dagger_s \in \dd x \mid Y_t, t \in [s, T])/\dd x,
\]
where $Y_T - Y_t = \int_t^T H_s X^\dagger_s \, \dd s + \beta_T - \beta_t$.
Relating $\pi^\dagger(s, x)$ and $v(t, x)$, we obtain a guiding term from the filtering density of the $X^\dagger$ process. More specifically, there is a process $C = (C_t, t\in [0,T])$ such that $v(t, x) = \tilde\EE[\Phi_T/\Phi_t\,L(X_T)\mid X_t = x, Y, \zeta]$ in the setting of remark~\ref{terminal} with he conditional distribution of $\zeta$ given $X_T$ given by $\mathrm{N}(B_\zeta X_T, \Sigma_\zeta)$, a normal distribution with parameters $B_\zeta X_T$, $\Sigma_\zeta$, and
\[
v(s, x) = C_t \pi^\dagger(s, x),
\]
with terminal condition
\begin{equation}\label{terminal-filter}
v(T, x) = C_T \pi^\dagger(T, x),
\end{equation}
where $ v(T, x)\equiv 1$ is obtained as limiting case.

In a statistical context, $C$ can depend on unknown parameters and therefore cannot be neglected. For example, maximum likelihood estimation of $B$ and $m$ can be implemented by gradient descent on $v(0, x_0)$, in which case knowledge of $C$ is needed.
We have the following proposition proven in appendix~\ref{app:proofs} and giving directly the backward filtering equation for the likelihood process $v$ making $C$ explicit.
\begin{proposition}\label{backwardkalmanbucy}
The process~$v$ is given by
\begin{equation}
    v(t,x)
        = \frac{C_t}{\sqrt{(2\pi)^d |P_t|}} \exp\left(-\frac12(x-\nu_t)' P_t^{-1} (x-\nu_t)\right),\label{linearansatz}
\end{equation}
where the parameters $\nu_t$, $P_t$, $C_t$ are obtained from solving a system of backward stochastic differential equations
\begin{align}
\label{backwardODEslinear}
\begin{split}
\dd \nu_t &= (B_t\nu_t + m_t) \, \dd t  - P_t H_t' (\dd Y_t - H_t \nu_t \, \dd t), \\
\dd P_t &= \left(B_t P_t + P_tB_t' - a_t + P_t H_t'  H_t P_t\right) \, \dd t,\\
\dd \log C_t &= \operatorname{Tr}(B_t) \, \dd t -  (H_t \nu_t)' \diamond \dd Y_t + \frac12 \|H_t \nu_t\|^2 \, \dd s
\end{split}
\end{align}
with $a =\sigma \sigma'$ and with terminal condition \eqref{terminal-filter}, assuming that $P_t$ has a well-defined inverse for $t \in [0,T)$ with bounded limit for $t \to T$.
Consequently, the smoothing process $X$ solves
\begin{align*}
    \dd X_t= (B_tX_t + m_t) \, \dd t + a_t P_t^{-1}(\nu_t - X_t) \, \dd t + \sigma_t \, \dd W^\star_t.
\end{align*}
\end{proposition}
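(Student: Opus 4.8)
The plan is to verify that the Gaussian ansatz \eqref{linearansatz}, with $(\nu_t,P_t,C_t)$ driven by \eqref{backwardODEslinear} and carrying the terminal data \eqref{terminal-filter}, solves the backward stochastic PDE \eqref{parabolic2} --- equivalently, its logarithmic form in Remark~\ref{palmowski} --- and then to invoke uniqueness for that equation (which holds under precisely the regularity and uniform-ellipticity assumptions under which \eqref{parabolic2} was established) to identify it with $v(t,x)=\tilde\EE[\Phi_T/\Phi_t\,L(X_T;\zeta)\mid X_t=x,Y,\zeta]$. The closing assertion then follows directly from Corollary~\ref{parametrized} applied with $\Theta_t=(\nu_t,P_t,C_t)$ (an It\^o process driven by $Y$ alone, hence independent of $X$ under the decoupling measure~$\tilde\PP$): from $\log v(t,x)=\log C_t-\tfrac d2\log(2\pi)-\tfrac12\log|P_t|-\tfrac12(x-\nu_t)'P_t^{-1}(x-\nu_t)$ one reads off $\nabla_x\log v(t,X_t,\Theta_t)=-P_t^{-1}(X_t-\nu_t)$, so $U_t=\sigma_t'\nabla_x\log v=-\sigma_t'P_t^{-1}(X_t-\nu_t)$ and $\sigma_tU_t=a_tP_t^{-1}(\nu_t-X_t)$; substituting into \eqref{conditional} gives the stated linear SDE. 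Note that $C_t$ cancels in $\nabla_x\log v$, so it is irrelevant for the dynamics though not for likelihood computations.

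For the verification the logarithmic form is the convenient one, because $\log v(t,x)$ is then a polynomial of degree at most two in $x$ and so are $\fL\log v$, $\tfrac12\|\sigma_t'\nabla\log v\|^2$, $(H_tx)'\,\dd Y_t$ and $\tfrac12\|H_tx\|^2\,\dd t$. First I would compute $\dd(\log v)(t,x)$ at fixed $x$ by It\^o's formula with $(\nu_t,P_t,\log C_t)$ as driving processes: $P_t$ solves an ODE, so $P_t^{-1}$ (via $\dd P_t^{-1}=-P_t^{-1}\dd P_t\,P_t^{-1}$) and $\log|P_t|$ (Jacobi's formula) contribute only drift; $\log C_t$ is given; and $\nu_t$ has martingale part $-P_tH_t'\,\dd Y_t$ with quadratic variation $\dd[\nu]_t=P_tH_t'H_tP_t\,\dd t$, which feeds the second-order term through $\partial^2_{\nu\nu}\log v=-P_t^{-1}$. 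Inserting the drifts from \eqref{backwardODEslinear} and grouping terms by their total degree in $x-\nu_t$ collapses the identity into three requirements. The coefficient of the quadratic form must vanish; after cancellation this is exactly $\dd P_t=(B_tP_t+P_tB_t'-a_t+P_tH_t'H_tP_t)\,\dd t$ (the symmetrised contributions $\tfrac12P_t^{-1}\dd P_t\,P_t^{-1}$, $-\tfrac12(B_t'P_t^{-1}+P_t^{-1}B_t)$, $\tfrac12P_t^{-1}a_tP_t^{-1}$ and $-\tfrac12H_t'H_t$ sum to zero). The coefficient of the linear term in $x-\nu_t$ must vanish, which returns $\dd\nu_t=(B_t\nu_t+m_t)\,\dd t-P_tH_t'(\dd Y_t-H_t\nu_t\,\dd t)$. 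The $x$-independent part must vanish, which is precisely the equation for $\log C_t$.

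The step needing the most care is the bookkeeping of the $Y$-integrals and the residual trace terms. The $\dd Y_t$-coefficient has to cancel identically in $x$ --- it does, the pieces $-(H_t\nu_t)'$, $-(x-\nu_t)'H_t'$ and $+(H_tx)'$ adding to zero --- but this is sensitive to how the $Y$-integrals are read: the $C$-equation carries a backward It\^o integral $(H_t\nu_t)'\diamond\dd Y_t$, which exceeds its forward counterpart by $\dd[(H\nu)',Y]_t=-\operatorname{Tr}(H_tP_tH_t')\,\dd t$, and the resulting $+\operatorname{Tr}(H_tP_tH_t')\,\dd t$ correction is exactly what cancels, in the $x$-independent balance, the two $-\tfrac12\operatorname{Tr}(H_tP_tH_t')\,\dd t$ terms arising from $\dd(-\tfrac12\log|P_t|)$ and from $\dd[\nu]_t$, while the $\pm\tfrac12\operatorname{Tr}(P_t^{-1}a_t)$, $\pm\operatorname{Tr}(B_t)$ and $\pm\tfrac12\|H_t\nu_t\|^2$ terms cancel against the remaining pieces of the $C$-equation and of $\fL\log v$. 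For the terminal condition I would work in the setting of Remark~\ref{terminal}: equating the ansatz at $t=T$ with $L(x;\zeta)=\mathrm N(\zeta;B_\zeta X_T,\Sigma_\zeta)$ as a function of $x$ fixes $P_T^{-1}=B_\zeta'\Sigma_\zeta^{-1}B_\zeta$, $P_T^{-1}\nu_T=B_\zeta'\Sigma_\zeta^{-1}\zeta$ and $C_T$ by comparing normalising constants; letting $\Sigma_\zeta^{-1}\to0$ degenerates this to $v(T,\cdot)\equiv1$ with $P_T^{-1}\to0$, in agreement with the assumed bounded limit of $P_t^{-1}$ as $t\to T$, and it is the standing hypothesis that $P_t$ is invertible on $[0,T)$ that makes $\nabla_x\log v$ --- hence the guiding drift $a_tP_t^{-1}(\nu_t-X_t)$ --- well defined there. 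One could instead derive the same system from the classical Kushner--Stratonovich/Kalman--Bucy filter for the time-reversed process $X^\dagger$ together with $v(\cdot)=C\,\pi^\dagger(\cdot)$, but the direct verification via \eqref{parabolic2} is self-contained.
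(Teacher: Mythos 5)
Your proposal is correct, and it reaches the result by a somewhat different route than the paper. The paper's proof also starts from the backward SPDE \eqref{parabolic2} with the Gaussian ansatz \eqref{linearansatz}, but it then argues by projection: it reads $C_t$, $\nu_t$, $P_t$ as the zeroth, first and second moments of $v(t,\cdot)$, derives only the equation for $C$ by integrating the SPDE over $x$ (using stochastic Fubini and the two integral identities for the drift and second-order terms), and cites \cite{FraserPotter1969} for the equations for $\nu$ and $P$; the concluding SDE for $X$ is, as in your plan, the application of corollary~\ref{parametrized}, which the paper flags but does not spell out. You instead verify directly that the ansatz solves the logarithmic form of the SPDE (remark~\ref{palmowski}) by matching coefficients of the quadratic, linear and constant parts in $x-\nu_t$, which is a self-contained replacement for the Fraser--Potter citation and makes explicit exactly the delicate bookkeeping the paper relegates to the proof of lemma~\ref{tildev}: the backward-versus-forward It\^o correction $+\operatorname{Tr}(H_tP_tH_t')\,\dd t$ cancelling the two $-\tfrac12\operatorname{Tr}(H_tP_tH_t')\,\dd t$ contributions from $\dd(-\tfrac12\log|P_t|)$ and $\dd[\nu]_t$ is precisely what appears there, and your quadratic- and linear-coefficient balances reproduce the Riccati and $\nu$-equations correctly. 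The one ingredient your route needs that the paper never states formally is uniqueness for \eqref{parabolic2}, which you use to identify the verified ansatz with $v(t,x)=\tilde\EE[\Phi_T/\Phi_t\,L(X_T;\zeta)\mid X_t=x,Y,\zeta]$; this is consistent with the Pardoux/Heunis results the paper cites (and the paper's own identification of the ansatz with $v$ implicitly leans on the same fact), but you should acknowledge it as an imported result rather than something established in the text. Your reading of the terminal condition, $P_T^{-1}=B_\zeta'\Sigma_\zeta^{-1}B_\zeta$ and $P_T^{-1}\nu_T=B_\zeta'\Sigma_\zeta^{-1}\zeta$ with $v(T,\cdot)\equiv 1$ as the degenerate limit, is a legitimate (indeed more careful) rendering of \eqref{terminal-filter} than the convention $P_T=\Sigma_\zeta$, $\nu_T=B_\zeta\zeta$ used later in theorem~\ref{girsanov}, so this is a harmless discrepancy of parametrisation, not a gap.
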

Note that result contains Theorem 2.6 in \cite{Mider2021} as special case for $H_t \equiv 0$.

\section{Guided processes}
\label{sec:guided_next}

The process~$U$ in theorem~\ref{thm:yor} is hardly ever tractable outside the linear Gaussian case. Hence, direct forward simulation of the conditional process by solving \eqref{conditional},
\begin{align*}
    \dd X_t &= b(t, X_t) \, \dd t + \sigma(t, X_t) U_t \, \dd t + \sigma(t, X_t) \, \dd W^\star_t \tag{\ref*{conditional}}
\intertext{
is in general not possible. 
In this section, we propose an approximation based on proposition~\ref{backwardkalmanbucy}.
For a tractable, $\cH$-adapted approximation $U^\circ$, the process $X$ solves equation~\eqref{guided},}
    \dd X_t &= b(t, X_t) \, \dd t + \sigma(t, X_t) U^\circ_t \, \dd t + \sigma(t, X_t) \, \dd W^\circ_t, \tag{\ref*{guided}}
\end{align*}
with $\dd W^\circ_t = \dd W^\star_t + (U_t  - U^\circ_t) \, \dd t =  \dd W_t - U^\circ_t \, \dd t$. 

Suppose (i) $\EE  \left[{\mathcal {E}}(( U^\circ -U)'\bigdot W^\star )_T \right] = 1$. 
Then by Girsanov's theorem, $\dd \PP^\circ = {\mathcal {E}}(( U^\circ -U)'\bigdot W^\star )_T\,\dd \PP$ defines a law under which 
$W^\circ$ is a Brownian motion, so weak solutions of \eqref{guided} can be obtained numerically under $\PP^\circ$. 
If also (ii) $\EE^\circ  \left[{\mathcal {E}}(( U -U^\circ)'\bigdot W^\circ )_T \right] = 1$,
(weighted) Monte Carlo samples from $X \mid Y$ can be obtained from those solutions by weighting them with Girsanov's likelihood $Z ={\mathcal {E}}(( U -U^\circ)\bigdot W^\circ ) _T = {\mathcal {E}}(( U^\circ -U)\bigdot W^\star )^{-1}_T$. We come back to this after the next section.

\subsection{Guiding based on a linear approximation}
It remains to choose $U^\circ$ that approximates $U$, express $Z$ containing the intractable~$U$ only as proportionality constant, and show equivalence between $\PP$ and $\PP^\circ$ assumed in $(i)$ and $(ii)$.

Let us split $b(t,x) = B_tx + m_t + g(t,x)$ into an (affine) linear part and a non-linear $g$, so that ideally $\tilde b(t, x) = B_t x + m_t$ can be thought of as approximation of $b$, likewise we approximate the dispersion coefficient $\sigma(t, x)$ by a dispersion coefficient~$\tilde\sigma_t$ constant in space. Again, let $\tilde a_t = \tilde \sigma_t\tilde\sigma'_t$.
Finally, if $\zeta$ is observed, we approximate the conditional distribution of $\zeta$ by $\mathrm{N}(B_\zeta X_T, \Sigma_\zeta)$ for some matrix $B_\zeta$ and noise covariance $\Sigma_\zeta$. If $\zeta$ was not observed, we chose $B_\zeta \equiv 0$ and  $\Sigma_\zeta$ large, compare with \cite{Mider2021}.

We leverage that we can solve the smoothing problem for the process $\dd \tilde X_t = \tilde b(t, \tilde X_t) \, \dd t + \tilde \sigma_t \, \dd W_t$
with the same observation model $\dd \tilde Y_t = H_t \tilde X_t \, \dd t + \dd \beta_t$.

Under regularity assumptions, proposition~\ref{backwardkalmanbucy} applies to the system $(\tilde X, \tilde Y)$ and the corresponding solution $(C, \nu, P)$ defines, with $Y$ substituted for $\tilde Y$, an backward filter 
\[{ \tilde v(t,x) =  \left[\frac{C_t}{\sqrt{(2\pi)^{d}|P_t|}}\exp \left(-\frac12 (x - \nu_t)' P_t^{-1} (x - \nu_t)\right)\right]}\] with $\tilde V_t = \tilde v(t, X_t)$
and
\begin{equation}\label{Ucirc}
U^\circ_t = u^\circ(t, X_t)
\end{equation}
where
\begin{align*}
u^\circ(t, x) &:= \sigma'(t, x) \nabla_x \log \tilde v(t,x)\\
&=
 \sigma'(t, x) P_t^{-1} (\nu_t - x).
\end{align*}
In fact, it is enough if the conclusion of proposition~\ref{backwardkalmanbucy} holds true.

\begin{theorem}\label{girsanov}
Under assumptions \ref{weak-ass} and \ref{novikov-ass} and assuming that $\nu$ and $P$ solve \eqref{backwardODEslinear} from 
proposition~\ref{backwardkalmanbucy} for $B, m$ and $a$ with $P_T =\Sigma_\zeta$ and $\nu_T= B_\zeta \zeta$, the measures 
$\PP$ and $\PP^\circ$ are equivalent with Radon--Nikodym derivative 
\begin{align}\label{likelihood}
\frac{\dd \PP}{\dd \PP^\circ} &=  \frac{\tilde V_0\, V_T}{V_0\,\tilde V_T } \exp\left(\int_0^T \psi(t, X^\circ_t, \nu_t) \, \dd t\right),
\end{align}
where 
\begin{align*}
\psi(t, x, \nu)&=  (b(t, x)    - \tilde b(t, x)    )' P_t^{-1} (\nu-x)  \\
&\quad- \frac12\operatorname{Tr}((a(t, x) - \tilde a_t) P_t^{-1})  + \frac12 ( \nu - x)'  P^{-1}_t (a(t, x) - \tilde a_t) P^{-1}_t (\nu-x)    .   
\end{align*}

For bounded, measurable, real-valued test functions $f$ defined on the path space $ C([0,T], \RR^d)$,
\begin{align*}
\EE\left[ f(X) \mid Y \right] &= \frac{\tilde V_0\, V_T}{V_0\,\tilde V_T } \EE\left[f(X^\circ)  \exp\left(\int_0^T \psi(t, X^\circ_t, \nu_t) \, \dd t\right) \xmiddle| Y \right]. 
\end{align*}
When $\zeta$ was observed, we have
\begin{align*}
\EE\left[ f(X) \mid Y \right] &= \frac{\tilde V_0\, V_T}{V_0\,\tilde V_T } \EE\left[f(X^\circ)  \exp\left(\int_0^T \psi(t, X^\circ_t, \nu_t) \, \dd t\right) \xmiddle| Y, \zeta\right]. 
\end{align*}

\end{theorem}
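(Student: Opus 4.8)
The plan is to establish the Radon--Nikodym identity \eqref{likelihood} by tracking two successive changes of measure and then read off the conditional-expectation formulas as an immediate consequence via the Kallianpur--Striebel-type representation. First I would recall from the discussion preceding this theorem that $\dd W^\circ_t = \dd W_t - U^\circ_t\,\dd t$, so that under the guided law $\PP^\circ$ the process $W^\circ$ is an $\cH$-Brownian motion and $X$ solves \eqref{guided}; conversely $X$ under $\PP$ solves \eqref{conditional} driven by $W^\star$. The Girsanov density relating the two is $\cE((U^\circ - U)'\bigdot W^\star)_T$, which is intractable because it involves the score process $U$. The main task is therefore to re-express this density: write $U = \sigma'(t,X_t)\nabla_x\log v(t,X_t,\Theta_t)$ via Corollary~\ref{parametrized} (here $\Theta_t$ is the Itô-process $(\nu_t,P_t,C_t)$ from Proposition~\ref{backwardkalmanbucy} applied to the true coefficients $B,m,a$, which parametrises $V$), and similarly $U^\circ = \sigma'(t,X_t)\nabla_x\log\tilde v(t,X_t)$ with $\tilde v$ the linear backward filter of \eqref{Ucirc}. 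Then apply Itô's formula to $\log V_t - \log\tilde V_t = \log v(t,X_t,\Theta_t) - \log\tilde v(t,X_t)$ along the solution of one of the SDEs.

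The key computation: compute $\dd\log V_t$ using the stochastic PDE of Remark~\ref{palmowski} (the log-form of \eqref{parabolic2}) composed with the dynamics of $X$, and compute $\dd\log\tilde V_t$ the same way but noting that $\tilde v$ satisfies the \emph{linear} backward equation, i.e.\ the analogue of Remark~\ref{palmowski} with $b$ replaced by $\tilde b$ and $a$ by $\tilde a$ (this is exactly the content of Proposition~\ref{backwardkalmanbucy} rewritten in log-density form). Taking the difference, the martingale (stochastic-integral) parts should combine to produce precisely the exponent $-\int_0^T(U^\circ-U)'\,\dd W^\star - \tfrac12\int_0^T(\|U^\circ\|^2 - \|U\|^2 \pm \text{cross terms})\,\dd t$ that appears in $\log\cE((U^\circ-U)'\bigdot W^\star)_T$, while the remaining finite-variation terms — the ones coming from the mismatch between $(b,a)$ and $(\tilde b,\tilde a)$ in the two generators — collapse to exactly $\int_0^T\psi(t,X_t,\nu_t)\,\dd t$ with $\psi$ as stated. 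Rearranging gives $\cE((U^\circ-U)'\bigdot W^\star)_T = \frac{V_0\,\tilde V_T}{\tilde V_0\,V_T}\exp(-\int_0^T\psi\,\dd t)$, and since $\dd\PP/\dd\PP^\circ$ is the reciprocal of this density (evaluated along $X = X^\circ$, the guided path), \eqref{likelihood} follows. Equivalence of $\PP$ and $\PP^\circ$, i.e.\ conditions (i) and (ii), then follows because the right-hand side of \eqref{likelihood} is a.s.\ strictly positive and finite: positivity and finiteness of $V_0,\tilde V_0,V_T,\tilde V_T$ come from Assumption~\ref{novikov-ass} and the ellipticity/boundedness hypotheses ensuring $P_t$ is invertible with bounded limit, and $\psi$ is bounded in $x$ along the path because $b - \tilde b = g$ and $a - \tilde a$ are controlled while $P_t^{-1}$ is bounded; hence the candidate density integrates to $1$ under $\PP^\circ$ and its reciprocal under $\PP$.

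Finally, the conditional-expectation formulas drop out: for bounded measurable $f$ on path space,
\[
\EE[f(X)\mid Y] = \EE^\circ\!\left[f(X)\,\frac{\dd\PP}{\dd\PP^\circ}\,\middle|\,Y\right] = \frac{\tilde V_0\,V_T}{V_0\,\tilde V_T}\,\EE^\circ\!\left[f(X^\circ)\exp\!\left(\int_0^T\psi(t,X^\circ_t,\nu_t)\,\dd t\right)\middle|\,Y\right],
\]
using that under $\PP^\circ$ the law of $X$ is the law of $X^\circ$ (the weak solution of \eqref{guided}), and that the ratio $\tilde V_0/V_0$ is $\sigma(Y)$-measurable (indeed deterministic given $Y$, as $\nu_0,P_0,C_0$ are computed from $Y$) so it may be pulled out of the conditional expectation; the $V_T,\tilde V_T$ factors remain inside. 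Here I would note $\EE^\circ[\,\cdot\mid Y] = \EE[\,\cdot\mid Y]$ on the relevant integrands since the two measures agree on $\sigma(Y)$ up to the density, which is handled by the abstract Bayes formula exactly as in the proof of Theorem~\ref{thm:yor}. The case with the extra discrete observation $\zeta$ is identical after conditioning additionally on $\zeta$ and using the modified terminal condition $\tilde V_T = \tilde v(T,X_T) = L(X_T;\zeta)$-type value from Remark~\ref{terminal}, with $V_T$ correspondingly.

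\textbf{Main obstacle.} The crux is the Itô-calculus bookkeeping in the key computation: one must apply Itô's formula to $\log v(t,X_t,\Theta_t)$ where $v$ itself solves a \emph{backward} SPDE driven by $\dd Y$, so there are $\dd Y$-differentials, $\dd X$-differentials, and their cross-variations, and the backward (Stratonovich-type $\diamond$) integral must be converted correctly; getting the $\dd Y$ terms from $\dd\log V$ and $\dd\log\tilde V$ to cancel exactly — they should, since both filters see the \emph{same} data $Y$ and the same $H$ — and verifying that the surviving drift is precisely $\psi$ (not $\psi$ plus spurious trace or quadratic terms) is where all the care is needed. A secondary subtlety is justifying the use of Corollary~\ref{parametrized} for the \emph{true} filter $V$ in the possibly non-linear, non-finite-dimensional case; the clean workaround, as the theorem's hypotheses hint, is to only ever differentiate the \emph{product} representation and to invoke the SPDE of Proposition/Remark level directly rather than Corollary~\ref{parametrized} pointwise — i.e.\ carry the argument at the level of $\dd\log v$ from Remark~\ref{palmowski}, which holds under the stated ellipticity and boundedness assumptions, so that $U$ never needs a finite-dimensional parametrisation.
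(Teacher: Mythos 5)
Your overall computational strategy for the density formula is close in spirit to the paper's: the paper also obtains \eqref{likelihood} by It\^o bookkeeping on $\log\tilde v(t,X_t)$ (lemma~\ref{tildev}) and cancellation of the $\dd Y$-terms against the $\Phi$-factors. The difference is how the \emph{true} score $U$ enters: you propose to differentiate $\log V_t$ through remark~\ref{palmowski}/corollary~\ref{parametrized}, which requires ellipticity, bounded differentiability of the coefficients, or a finite-dimensional parametrisation of $V$ --- none of which is assumed in theorem~\ref{girsanov}. The paper avoids this entirely: it never writes an SPDE or a gradient for $v$, but uses the identity $p=\Phi V = p_0\,\cE(U'\bigdot W)$ already established in theorem~\ref{thm:yor} together with Yor's formula $\cE(X)\cE(Y-[X,Y])=\cE(X+Y)$ to factor $\cE((U^\circ-U)'\bigdot W^\star)_T$ into $\tfrac{p_0}{p_T}\,\cE(-(U^\circ)'\bigdot W^\circ)^{-1}_T$, and only the tractable factor is evaluated with lemma~\ref{tildev}. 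Your own ``workaround'' still imports the extra regularity hypotheses, so as written your argument proves a weaker statement than the theorem.

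The more serious gap is the equivalence of $\PP$ and $\PP^\circ$ (i.e.\ conditions (i) and (ii)). You argue that the right-hand side of \eqref{likelihood} is a.s.\ strictly positive and finite and conclude that ``the candidate density integrates to $1$''. That inference is invalid: a stochastic exponential of a local martingale is in general only a supermartingale, and a.s.\ positivity and finiteness of the candidate density do not give $\EE^\circ[\dd\PP/\dd\PP^\circ]=1$; this is precisely what must be proved. Moreover your supporting claim that $\psi$ is bounded along the path is false in general: $g=b-\tilde b$ is not assumed bounded (in the paper's application it is cubic), and $\psi$ contains terms quadratic in $\nu_t-x$, so the exponent is unbounded. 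The paper handles this by reducing to equivalence of $\tilde\PP$ and $\PP^\circ$ (using the already-known equivalence $\PP\sim\tilde\PP$), noting that under $\tilde\PP$ the process $\nu$ is Gaussian and independent of $X$, invoking Fernique's theorem to get $\tilde\EE\exp(\epsilon\|\nu-\hat\nu\|_\infty^2)<\infty$ and combining it with the Novikov-type assumption~\ref{novikov-ass} on $\sup_t\|X_t\|$ to verify Novikov's condition for $(U^\circ)'\bigdot W$ on time intervals of a fixed small length $\delta$, which yields the martingale property on all of $[0,T]$. Some argument of this type (Novikov/Kazamaki on short intervals, or an equivalent uniform-integrability argument) is indispensable and is missing from your proposal.
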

\begin{proof}
Note that $\PP$ and $\PP^\circ$ coincide on~$\cH_0$.

By Yor's formula ${\mathcal {E}}(X){\mathcal {E}}(Y-[X,Y])={\mathcal {E}}(X+Y)$,
\begin{align*}
 {\mathcal {E}}((-U + U^\circ )'\bigdot W^\star )_t &=   {\mathcal {E}}(-U' \bigdot W^\star)_t{\mathcal {E}}\left((U^\circ)' \bigdot W^\star + [U'  \bigdot W^\star,(U^\circ)' \bigdot W^\star]\right)_t 
\\
&= \frac{p_0}{p_t}{\mathcal {E}}(- (U^\circ)' \bigdot W^\circ )^{-1}_t,
\end{align*}
where we used $ p = p_0\cE(U' \bigdot W) = p_0\cE(-U' \bigdot W^\star)^{-1}$ by \eqref{Up} 
and {\small
\begin{align*}
& \cE\left((U^\circ)' \bigdot W^\star + [U'  \bigdot W^\star,(U^\circ)' \bigdot W^\star]\right)\\
& \qquad = \exp\left( 
(U^\circ)' \bigdot W^\circ + \int_0^\cdot 
 ( U^\circ_s - U_s)'U_s^\circ \, \dd s + [U'  \bigdot W^\star,(U^\circ)' \bigdot W^\star] - \frac12 [(U^\circ)' \bigdot W^\star]
\right)\\
& \qquad = \exp\left( 
(U^\circ)' \bigdot W^\circ +  \frac12 [(U^\circ)' \bigdot W^\circ]
\right) = {\mathcal {E}}(- (U^\circ)' \bigdot W^\circ )^{-1}.
\end{align*}
}

By lemma~\ref{tildev},
\begin{align*}
{\mathcal {E}}(- (U^\circ)' \bigdot W^\circ )_0 &=   \frac{\Phi_0}{\Phi_T} \frac{\tilde v(0, X_0)}{\tilde v(T, X_T)} \exp\left(\int_0^T \psi(t, X_t, \nu_t) \, \dd t\right).
\end{align*}
Combining the preceding displays gives \eqref{likelihood}.
It remains to show equivalence between $\PP$ and $\PP^\circ$. By equivalence of $\PP$ and $\tilde \PP$,
it suffices to show equivalence between $\tilde \PP$ and $\PP^\circ$.
Under $\tilde\PP$, 
$\hat \nu_t =  \tilde\EE\, \nu_t$ is bounded and
$\nu - \hat\nu$ is a centred Gaussian process on the separable Banach space $C([0,T], \RR^d)$ with supremum norm $\|\cdot\|_\infty$, so by Fernique's theorem since $P_T$ is finite, see
\cite[theorem 2.7]{Da_Prato_Zabczyk_2014}, there is $\epsilon > 0$ such that
\begin{equation}\label{fernique}
\tilde \EE \exp(\epsilon \|\nu -\hat \nu\|_\infty/2) < \infty
\end{equation}
and therefore
\[
\tilde \EE \exp(\epsilon \|\nu \|_\infty/2) 
\le \tilde \EE \exp(\epsilon \|\nu -\hat \nu\|_\infty/2 + \epsilon\| \hat \nu\|_\infty/2) < \infty.
\]
Then
\[
 \cE((U^\circ)' \bigdot W)_t = \cE\left(\int_0^t \left( \sigma'(s, X_s)  P_s^{-1} (\nu_s - X_s)\right)' \, \dd W_s\right) 
\]
and
\begin{align*}    
& \tilde \EE \exp\left(\frac12\int_t^{\min(t+\delta, T)} \|\sigma'(s, X_s)  P_s^{-1} (\nu_s - X_s)\|^2 \, \dd s \right)\\
& \qquad \le\tilde \EE  \exp\left(\frac12\bar a \bar p \delta\left(\|\nu - \hat\nu\|^2_\infty + \|\hat\nu\|^2_\infty +  \|   X\|^2_\infty \right)\right)\\
& \qquad \le \exp\left(\frac12\bar a \bar p \delta  \|\hat\nu\|^2_\infty\right) \cdot \tilde \EE  \exp\left(\frac12\bar a \bar p \delta\|\nu - \hat\nu\|^2_\infty\right)  \cdot \tilde \EE \exp\left(\frac12\bar a \bar p \delta  \|   X\|^2_\infty \right),
\end{align*}
where $\bar a = \sup_{t \in [0,T], x \in \RR^d} {\spectral}\sigma(t,x)\sigma(t,x)'{\spectral}$ and $\bar p = \sup_{t \in [0,T]} {\spectral}P_t^{-2}{\spectral}$ where ${\spectral} \cdot {\spectral}$ denotes the spectral norm. In the last step, we have used that $\hat \nu$ is deterministic and $\nu$ and $X$ are independent under~$\tilde \PP$.

Therefore, we may choose $\delta > 0$ small enough, such that by \eqref{fernique} and assumption~\ref{novikov-ass}, the preceding display is finite and bounded in $t \in [0,T]$. This establishes that for each $t \in [0,T]$  Novikov's condition holds for $(U^\circ)' \bigdot W$ for a short time $\delta$ independent of $t$, showing equivalence between $\tilde \PP$ and $\PP^\circ$. 
\end{proof}

Note the parallels between \eqref{likelihood} and formula (1.11) in \cite{Mider2021} for the discretely observed case.

In the proof we used the following lemma, which is proven in appendix~\ref{app:proofs} directly from proposition \ref{backwardkalmanbucy} using It\^o's formula. Heuristically, the result follows from remark~\ref{palmowski}.
\begin{lemma}\label{tildev} In the setting of theorem~\ref{girsanov},
\begin{align*}
\dd\log \tilde v(t, X_t) &=     
 -  (H_t X_t)' \, \dd Y_t + \frac12 \|  H_t  X_t \|^2 \, \dd t \\
&\quad + (\sigma'(t, X_t) P_t^{-1} (\nu_t-X_t))' \, \dd W^\circ_t  +\frac12 \|\sigma (s, X_t) P^{-1}_t (X_t-\nu_t)\|^2 \, \dd t
\\
&\quad +  (b(t, X_t)   - (B_tX_t + m_t)  )' P_t^{-1} (\nu_t-X_t)\, \dd t  \\
&\quad- \frac12\operatorname{Tr}((a(t, X_t) - \tilde a_t) P_t^{-1}) \, \dd t + \frac12 ( X_t- \nu_t)'  P^{-1}_t (a(t, X_t) - \tilde a_t) P^{-1}_t (X_t-\nu_t) \, \dd t.
\end{align*}
\end{lemma}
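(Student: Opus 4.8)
\textbf{Proof proposal for Lemma~\ref{tildev}.}

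The plan is to apply It\^o's formula to $\log \tilde v(t, X_t)$, where the explicit Gaussian form of $\tilde v$ from proposition~\ref{backwardkalmanbucy} is
\[
\log \tilde v(t, x) = \log C_t - \tfrac{d}{2}\log(2\pi) - \tfrac12 \log |P_t| - \tfrac12 (x-\nu_t)' P_t^{-1} (x-\nu_t),
\]
so that $\log \tilde v(t, X_t)$ is a function of the three driving It\^o processes $\log C_t$, $P_t$ (entering through $\log|P_t|$ and $P_t^{-1}$), $\nu_t$ and the diffusion $X_t$, whose dynamics are all known: $\dd X_t = b(t,X_t)\,\dd t + \sigma(t,X_t)\,\dd W_t$ and $\dd\nu_t$, $\dd P_t$, $\dd\log C_t$ are given by \eqref{backwardODEslinear} (note $P$ and $C$ have finite variation, while $\nu$ and $X$ carry martingale parts driven by $Y$ and $W$ respectively, which are independent under $\tilde\PP$). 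First I would differentiate the quadratic form $q_t := -\tfrac12 (X_t-\nu_t)' P_t^{-1}(X_t-\nu_t)$: write $e_t = X_t - \nu_t$, compute $\dd e_t$ from $\dd X_t - \dd\nu_t$, use the finite-variation rule $\dd(P_t^{-1}) = -P_t^{-1}(\dd P_t) P_t^{-1}$ together with the matrix Riccati equation for $\dd P_t$, and assemble $\dd q_t$ via the product/It\^o rule, collecting the drift contributions, the $\dd W_t$ term $(\sigma'(t,X_t)P_t^{-1}(\nu_t - X_t))'\dd W_t$ from $e'P^{-1}(\dd e)$, the $\dd Y_t$ term from the martingale part of $\dd\nu_t$, and the quadratic-variation corrections $\tfrac12[e'P^{-1}\bigdot e]$ (splitting into the $[X]$ part giving $-\tfrac12\operatorname{Tr}(a(t,X_t)P_t^{-1})$ plus a $+\tfrac12\|\sigma'(t,X_t)P_t^{-1}e_t\|^2$-type term, and the $[\nu]$ part giving the $P_t H_t' H_t P_t$ contributions). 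Then I would add $\dd\log C_t = \operatorname{Tr}(B_t)\,\dd t - (H_t\nu_t)'\diamond\dd Y_t + \tfrac12\|H_t\nu_t\|^2\,\dd t$ and $-\tfrac12\,\dd\log|P_t| = -\tfrac12\operatorname{Tr}(P_t^{-1}\dd P_t)$.

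The key bookkeeping step is to see how the $\operatorname{Tr}(B_t)$ from $\dd\log C_t$, the $-\tfrac12\operatorname{Tr}(P_t^{-1}\dd P_t)$ from the log-determinant, and the Riccati terms in $\dd q_t$ combine. Substituting $\dd P_t = (B_t P_t + P_t B_t' - a_t + P_t H_t' H_t P_t)\,\dd t$, the trace $-\tfrac12\operatorname{Tr}(P_t^{-1}\dd P_t)$ produces $-\operatorname{Tr}(B_t)\,\dd t + \tfrac12\operatorname{Tr}(a_t P_t^{-1})\,\dd t - \tfrac12\operatorname{Tr}(H_t' H_t P_t)\,\dd t$; the first of these cancels $\operatorname{Tr}(B_t)\,\dd t$ from $\dd\log C_t$, and the $a_t$ trace combines with the $-\tfrac12\operatorname{Tr}(a(t,X_t)P_t^{-1})$ from the $[X]$ correction to yield exactly the stated $-\tfrac12\operatorname{Tr}((a(t,X_t)-\tilde a_t)P_t^{-1})$ (using $a_t = \tilde a_t$ for the linear reference system). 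Similarly the backward-It\^o term in $\dd\log C_t$ must be matched against the forward $\dd Y_t$-terms arising from $e'P^{-1}(-P_t H_t'(\dd Y_t - H_t\nu_t\,\dd t))$ and from completing the $-(H_t X_t)'\dd Y_t + \tfrac12\|H_t X_t\|^2\,\dd t$ combination; here one uses that the conversion between a forward and backward It\^o integral against $Y$ contributes a quadratic-covariation correction with $\beta$ (since $\dd Y_t = H_t X_t^\dagger\,\dd t + \dd\beta_t$), which is what produces the Riccati term $P_t H_t' H_t P_t$ seen consistently.

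The main obstacle is the careful handling of the backward It\^o integral $\diamond\,\dd Y_t$ in $\dd\log C_t$ versus the forward integrals appearing everywhere else: one must either convert all stochastic integrals against $Y$ to a common convention, picking up the right $\tfrac12$-corrections, or argue (as remark~\ref{palmowski} suggests heuristically) that after this conversion the $H$-dependent terms reorganise into the clean form in the statement. Apart from that, the computation is a determined but routine application of It\^o's formula; I would organise it by writing $\dd\log\tilde v = \dd\log C_t - \tfrac12\,\dd\log|P_t| + \dd q_t$, reducing everything to drift terms plus the two martingale terms $(\sigma'(t,X_t)P_t^{-1}(\nu_t-X_t))'\dd W^\circ_t$ (after substituting $\dd W_t = \dd W^\circ_t + U^\circ_t\,\dd t$, which shifts part of the $\dd W$-drift into the $\tfrac12\|\sigma(s,X_t)P_t^{-1}(X_t-\nu_t)\|^2$ term) and $-(H_t X_t)'\dd Y_t$, and then checking term by term against the claimed expression, recalling that $\tilde b(t,x) = B_t x + m_t$ and $\tilde a_t = \tilde\sigma_t\tilde\sigma_t'$ so that the $(b-\tilde b)'P^{-1}(\nu-X)$ and $(X-\nu)'P^{-1}(a-\tilde a)P^{-1}(X-\nu)$ terms are exactly the residuals left after the linear/Gaussian parts telescope. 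Since assumption~\ref{weak-ass} and the regularity hypotheses of proposition~\ref{backwardkalmanbucy} guarantee $P_t$ is invertible on $[0,T)$ with bounded limit and all coefficients are bounded and smooth enough, It\^o's formula applies without further justification.
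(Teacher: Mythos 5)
Your plan is essentially the paper's own proof: it likewise writes $\dd\log\tilde v(t,X_t) = \dd\log C_t - \tfrac12\,\dd\log|P_t| + \dd\bigl(-\tfrac12(X_t-\nu_t)'P_t^{-1}(X_t-\nu_t)\bigr)$, substitutes the backward equations \eqref{backwardODEslinear} (Riccati for $P$, the SDEs for $\nu$ and $\log C$), converts the backward integral $(H_t\nu_t)'\diamond\dd Y_t$ into a forward one at the cost of a covariation correction, and collects terms until only the $b-\tilde b$ and $a-\tilde a$ residuals, the block $-(H_tX_t)'\dd Y_t+\tfrac12\|H_tX_t\|^2\dd t$ and the martingale term (rewritten via $\dd W_t = \dd W^\circ_t + U^\circ_t\,\dd t$) remain. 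The one small imprecision is that the forward/backward conversion produces a $\operatorname{Tr}(H_tP_tH_t')\,\dd t$ term that \emph{cancels} against the $P_tH_t'H_tP_t$ contributions coming from the Riccati equation and from $[\nu]$, rather than generating them, but this is bookkeeping that sorts itself out exactly as in the paper's computation.
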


\subsection{Monte Carlo method}\label{cranknicolson}
The last theorem~\ref{girsanov} can be used directly to estimate the expectation of functionals of the conditional process. It would be natural to derive estimates by Monte Carlo sampling, but if the Monte Carlo weights $\frac{\dd \P_{X \mid Y= y}}{\dd P^\circ}(X^\circ)$ have too high variance, it can be better to use instead Markov Chain Monte Carlo methods for example. Here we give the Metropolis--Hastings sampler with an autoregressive random walk on the driving Brownian motion as example. In line with the spirit of the paper, we formulate the Metropolis--Hastings algorithm in  the space of sample paths, see \cite{10.1214/aoap/1027961031}, and do not discuss approximation errors. In our implementation in section~\ref{sec:simulations}, we use a fine time grid to make sure that the approximation error of the SDEs is small compared to the Monte Carlo sampling error. 

In order to apply the Metropolis--Hastings algorithm, we need to define the proposal kernel~$Q$, a measurable function~$\alpha$ for the acceptance probabilities and the target distribution~$\pi$.
On Wiener space equipped with the Wiener measure $\P_W$, we define the proposal kernel
\[
Q(w, \dd w') 
\]
indirectly, by conditional on the current Brownian path~$w$ proposing a new Brownian path 
\begin{equation*}
    w' = \varrho w + \sqrt{1-\varrho^2} \, \bar W, 
\end{equation*}
where $\bar W$ is an independent Brownian motion and $\varrho \in (0,1)$ is an autocorrelation parameter governing the size of random walk steps in Wiener space.
We denote the strong solution of~\eqref{guided}, whose existence we assume, by $F^\circ$ such that
\[
X = F^\circ(W^\circ).
\]
The target distribution $\P_{X \mid Y = y}$ has density $\frac{\dd \P_{X \mid Y= y}}{\dd \P^\circ}$, known up to a constant, with respect to the ``reference measure'' $\P^\circ$.
Thus, to sample from $\P_{X\mid Y=y}$, we can sample from the measure $\pi$ defined via its $\P_W$ density, 
\[
\frac{\dd\pi}{\dd \P_W}  = \frac{\dd \P_{X \mid Y= y}}{\dd \P^\circ}\circ F^\circ,
\]
which has $\P_{X\mid Y=y}$ as push forward under $F^\circ$, such that $F^\circ(Z) \sim \P_{X\mid Y=y}$, where $Z \sim \pi$. To sample from $\pi$ we are using Metropolis--Hastings with proposal kernel $Q$.
To be precise, given $z$, we propose
$z' = \varrho z + \sqrt{1-\varrho^2} \, \bar W$ and
accept the proposed $z'$ with probability $\alpha(z, z')$ as new sample $z$,  otherwise we reject the proposal and retain the old sample $z$. Here,
\[
\alpha(z, z') = \min\left(
\frac{\frac{\dd\pi}{\dd \P_W} (z')}
{\frac{\dd\pi}{\dd \P_W} (z)
}, 1
\right)
\]
is the Metropolis-Hastings acceptance rate.

In appendix~\ref{app:proofs} we prove the following proposition that this procedure leads to the correct stationary distribution.
\begin{proposition}\label{balance}
     The measure~$\pi$ is the stationary distribution of the Metropolis--Hastings chain with proposal kernel $Q$ and acceptance probability $\alpha$. 
\end{proposition}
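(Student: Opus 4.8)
The plan is to verify detailed balance of the Metropolis--Hastings chain with respect to $\pi$ on Wiener space, which immediately implies $\pi$-stationarity. The slight subtlety is that the proposal kernel $Q$ is not symmetric in the naive sense, but it is reversible with respect to the Wiener measure $\P_W$; this is the key structural fact that makes the acceptance ratio collapse to the simple form $\frac{\dd\pi/\dd\P_W(z')}{\dd\pi/\dd\P_W(z)}$ rather than requiring a proposal-density correction.

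First I would establish that $Q$ is $\P_W$-reversible, i.e.\ that $\P_W(\dd z)\,Q(z,\dd z') = \P_W(\dd z')\,Q(z',\dd z)$ as measures on the product space. This is the preconditioned Crank--Nicolson observation of \cite{10.1214/aoap/1027961031}: if $Z \sim \P_W$ and $\bar W$ is an independent Brownian motion, then the pair $(Z, \varrho Z + \sqrt{1-\varrho^2}\,\bar W)$ is jointly Gaussian (in the sense of Gaussian measures on the path space) with a covariance structure that is invariant under swapping the two coordinates; concretely, $(\varrho Z + \sqrt{1-\varrho^2}\,\bar W,\; \varrho(\varrho Z + \sqrt{1-\varrho^2}\,\bar W) + \sqrt{1-\varrho^2}\,\bar W')$ has the same law as $(Z', \varrho Z' + \sqrt{1-\varrho^2}\,\bar W'')$ when one runs the map backwards. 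So $\P_W$ is invariant for $Q$ and the joint law of consecutive states is exchangeable.

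Next I would write down the transition kernel $K$ of the Metropolis--Hastings chain explicitly: for $z \ne z'$, $K(z,\dd z') = \alpha(z,z')\,Q(z,\dd z')$, plus a rejection atom at $z$ with mass $1 - \int \alpha(z,z')\,Q(z,\dd z')$. Using the $\P_W$-reversibility of $Q$ together with the defining property $\frac{\dd\pi}{\dd\P_W} = \frac{\dd\P_{X\mid Y=y}}{\dd\P^\circ}\circ F^\circ$, I would check the detailed balance equation $\pi(\dd z)\,K(z,\dd z') = \pi(\dd z')\,K(z',\dd z)$ off the diagonal. Writing $\pi(\dd z) = h(z)\,\P_W(\dd z)$ with $h = \frac{\dd\pi}{\dd\P_W}$, the left side is $h(z)\,\alpha(z,z')\,\P_W(\dd z)\,Q(z,\dd z')$, and since $\min(h(z')/h(z),1)\,h(z) = \min(h(z'),h(z)) = \min(h(z)/h(z'),1)\,h(z')$ is symmetric in $(z,z')$, the claim follows once the $Q$-part is symmetrised by the reversibility of $Q$. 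The diagonal (rejection) part is trivially symmetric. Integrating detailed balance over the first coordinate then gives $\int \pi(\dd z)\,K(z,\dd z') = \pi(\dd z')$, i.e.\ $\pi$ is stationary.

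The main obstacle — and the only place requiring care rather than bookkeeping — is the first step: making the $\P_W$-reversibility of the autoregressive (pCN) proposal rigorous on the infinite-dimensional path space, since one cannot manipulate proposal ``densities'' directly there. I would handle this by the change-of-variables/symmetry argument at the level of Gaussian measures sketched above, citing \cite{10.1214/aoap/1027961031}, rather than by any finite-dimensional density computation; once that is in hand, everything else is the standard finite-dimensional detailed-balance argument carried over verbatim. I would also note in passing that $h$ is only known up to a normalising constant, but that constant cancels in $\alpha$, so the algorithm and the proof are unaffected.
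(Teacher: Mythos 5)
Your proposal is correct and follows essentially the same route as the paper: detailed balance for the Metropolis--Hastings kernel, reduced via the identity $\min(a/b,1)\,b=\min(b/a,1)\,a$ for the $\P_W$-densities and the symmetry (reversibility with respect to $\P_W$) of the joint measure $\P_W(\dd z)\,Q(z,\dd z')$, which is exactly the symmetry of $\lambda$ invoked in the paper's proof. Your additional remarks on the rejection atom and on justifying the pCN reversibility through the exchangeable Gaussian structure of $(Z,\varrho Z+\sqrt{1-\varrho^2}\,\bar W)$ are compatible elaborations of the same argument, not a different approach.
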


\subsection{Bellman principle and optimal control for the smoothing distribution}\label{bellman}

Finding the smoothing distribution can be thought of as a stochastic control problem, see e.g.~\cite{TzenRaginsky}. Hence, in this section, we revisit heuristically the process defined in \eqref{guided} from a stochastic control perspective, this also elucidates \eqref{parabolic2}. 
Just as under additional regularity assumptions $V_t = v(t, X_t)$, we can make the ansatz $U_t = u(t,X_t)$ for a choice of (sufficiently smooth) $u: [0,T]\times \RR^d \to \RR^d$.
Here we consider to choose $u$ which minimizes a Kullback--Leibler type cost function:
Let $\PP^ u$ defined by the change of measure $\frac{\dd \PP^u}{\dd \PP} =
\cE\left(U'\bigdot W \right) 
= \cE\left(-U'\bigdot  W^u\right)^{-1} = (\frac{\dd \PP}{\dd \PP^u})^{-1}$.

We define the reward function (the negative of a cost function)
\begin{equation*}
J_s^t = \int_s^t (H X_r)' \, \dd Y_r  - \frac12 \int_s^t \| H X_r\|^2 \, \dd r - \int_s^t u(r, X_r)' \, \dd W^u_r - \int_s^t \frac12 \|u(r, X_r)\|^2 \, \dd r,
\end{equation*}
which is the sum of the log-likelihood process and the negative log-likelihood between $\PP^u$ under which $W^u$ is a Brownian motion and~$\PP$.
This objective can be made light of in terms of the variational formula of \cite{Donsker1983}, which relates a conditional distribution to a Kullback--Leibler optimization problem: As example, let $\PP^\star$ be the conditional or posterior measure induced having observed a datum with likelihood $Z$, using $\PP$ as prior. That is $\PP^\star$ is the measure given by $\dd \PP^\star = Z \,
\dd \PP$ using the likelihood~$Z$ as Radon--Nikodym derivative with respect to the prior. By the variational formula, a consequence of Jensen's inequality, $\PP^\star$ is the maximizer of $
\EE^\QQ \log Z   - \operatorname{KL}(\QQ \parallel \PP)
$ over measures $\QQ \ll \PP$.
Here we essentially restrict our search to random measures of the form $\QQ = \PP^u$, $ u(t, X_t)$ with
\[
\log \frac{\dd \PP^u}{\dd \PP} = \textstyle\int_0^T u(t, X_t)' \, \dd W_t - \frac{1}{2} \int_0^T \|u(t, X_t)\|^2 \, \dd t 
\]
for some process $u(t, X_t)$, 
observe $\EE^u[\log \frac{\dd \PP^u}{\dd \PP}] = \operatorname{KL}(\PP^u \parallel \PP)$,  and search for an optimal control $u^\star$ attaining the maximum of the conditional expectation. ($\EE^u$ denoting the expectation under $\PP^u$.) To solve this, we consider the optimal expected reward-to-go conditional on $Y$, that is
\[
    \log v(s, x) := \sup_u \EE^u\left[J_s^T\mid X_s = x, Y\right],
\]
and use the Bellman principle: ``An optimal policy has the property that whatever the initial state and initial decision are, the remaining decisions must constitute an optimal policy with regard to the state resulting from the first decision.'' (See \cite{bellman1957dynamic}, Chap. III.3.)
So
\[
   \sup_{u} \EE^u \left[J_s^{s + \epsilon} + \log v(s+\epsilon,X_{s + \epsilon}) \mid X_s = {x}, Y \right] - \log v(s,x)  = 0.
\]
Taking the formal derivative, 
\begin{align*}
(\dd \log v)(s, \cdot)  & + (\mathfrak L \log v)(s, \cdot) \, \dd s
+  (H_t \cdot)' \, \dd Y_s  - \frac12   \|H_t \cdot\|^2 \, \dd s \\
& +\sup_u \left(
  u'(s,\cdot)\sigma'(s,x) \pder{x} \log v(s,\cdot)
-   \frac12 \|u(s, \cdot)\|^2\right) \dd s = 0.
\end{align*}
The supremum $ \frac12 \|u(s, \cdot)\|^2$ of the right-most term is attained in \[
u(s,x) = \sigma'(s,x) \pder{x} \log v(s,x)
\]
(see \cite{TzenRaginsky}), with $v(s,x)$ solving the parabolic stochastic differential equation 
\begin{equation}\label{parabolic}
\dd \log v(t, \cdot) + \fL \log v(t, \cdot) \, \dd t  + (H_t \cdot)'\diamond \dd Y_t + \frac12\|\sigma(s,\cdot)' \nabla \log v(t, \cdot)\|^2 \, \dd t - \frac12 \|(H_t \cdot)'\|^2 \, \dd t = 0.
\end{equation}

We now have the Hamilton--Jacobi--Bellman stochastic parabolic differential equation for $\log v$ and~$v$.
We can relate \eqref{parabolic} to \eqref{parabolic2} by remark~\ref{palmowski}.
By the verification theorem (c.f.~\cite{TzenRaginsky}), the reward-to-go $\log v$ characterizes the optimal control $u^\star$: If $v$ is the solution with boundary condition $\log v(T, \cdot) \equiv 0$, then
\[
u^\star(t,x) = \sigma(t,x)' \nabla \log v(t,x)
\]
is the optimal control and the drift of $X$ under~$\PP^\star$ is $b + a \nabla \log v$.

We can therefore think of \eqref{conditional} as solution to a Kullback--Leibler control problem.

\subsection{Optimizing the guide}\label{adam}

The performance of the method depends on the choice of linear auxiliary process, see the discussion  in \textcite{schauer2017guided} (Section 4.4) for general ways to approach this. 
Let $\theta = \operatorname{vec}(B, m, \sigma)$ be the tuneable parameters, determining $X^\circ$ using a time-homogenous linear process as guide. We write  
 $\P^\circ_\theta$ to emphasize the dependence of the law of  $X^\circ$ on the choice of $\theta$.
 In line with section~\ref{bellman} we propose to simulate the process $X^\circ$ and the Jacobian process $DX^\circ$, see \cite{RogersWilliams2}, \cite{pmlr-v118-li20a}, 
 \[(DX^\circ_t)^{(i,\cdot)} = \partial_{\theta_i} X^\circ_t, t \in [0, T], \quad i \in \{1, \dots, d\},
 \]
 using a differentiable numerical solver and to maximize the reward
 \begin{equation}\label{reward2}
J = \int_0^T (H_t X^\circ_t)' \, \dd Y_t - \frac12 \int_0^T \|H_t X^\circ_t\|^2  \, \dd t  + \log L(X^\circ_T; \zeta)  - \int_s^t u(r, X_r)' \, \dd W_r - \int_s^t \frac12 \|u(r, X_r)\|^2 \, \dd r,
 \end{equation}
 using gradient based optimization methods with gradients $\nabla_\theta J$.

\section{Application: Reaction-diffusion}\label{sec:simulations}
We consider the following discretization of a stochastic reaction diffusion system as a $d = 100$ dimensional stochastic differential equation
\begin{equation}\label{reaction1}
\dd X_t = -5\Lambda X_t \, \dd t + F(X_t) \, \dd t + \dd W_t, \quad X_0 = 0 ,
\end{equation}
for $t \in [0,1]$, where $W = (W_t)$ is an $\RR^d$-valued Brownian motion. The tridiagonal matrix $\Lambda \in \RR^{d \times d}$ is given as
\begin{equation*}
\Lambda^{(1,1)} = \Lambda^{(d,d)} = 1, \quad \Lambda^{(i,i)} = 2, \quad 1 < i < d, \quad \Lambda^{(i, i+1)} = \Lambda^{(i+1, i)} = -1, \quad 1 \le i < d
\end{equation*}
and   a non-linear force 
\begin{equation*}
F(x)_i = 2x_i - 2x_i^3, \quad x \in \RR^d,
\end{equation*}
that causes spontaneous organization.

This system corresponds to a semidiscrete finite difference approximation in space of the stochastic reaction-diffusion equation
\begin{equation*}
    \dd \mathbb X(t,x) = \Bigl(5 \Delta   \mathbb X(t,x) + 2 \mathbb X(t,x) - 2\mathbb X^3(t,x)\Bigr) \, \dd t + \dd \mathbb W(t,x)
\end{equation*}
with initial condition $\mathbb X(0,x) = 0$ and truncated to a finite space interval of~$\RR$ with grid size~$1$ and Neumann boundary conditions. Here $\Delta = \partial_{xx}$ denotes the Laplace operator, and $\mathbb W$ is a cylindrical Wiener process.

\begin{figure}[htp] 
    \centering
    \includegraphics[width=0.49\linewidth]{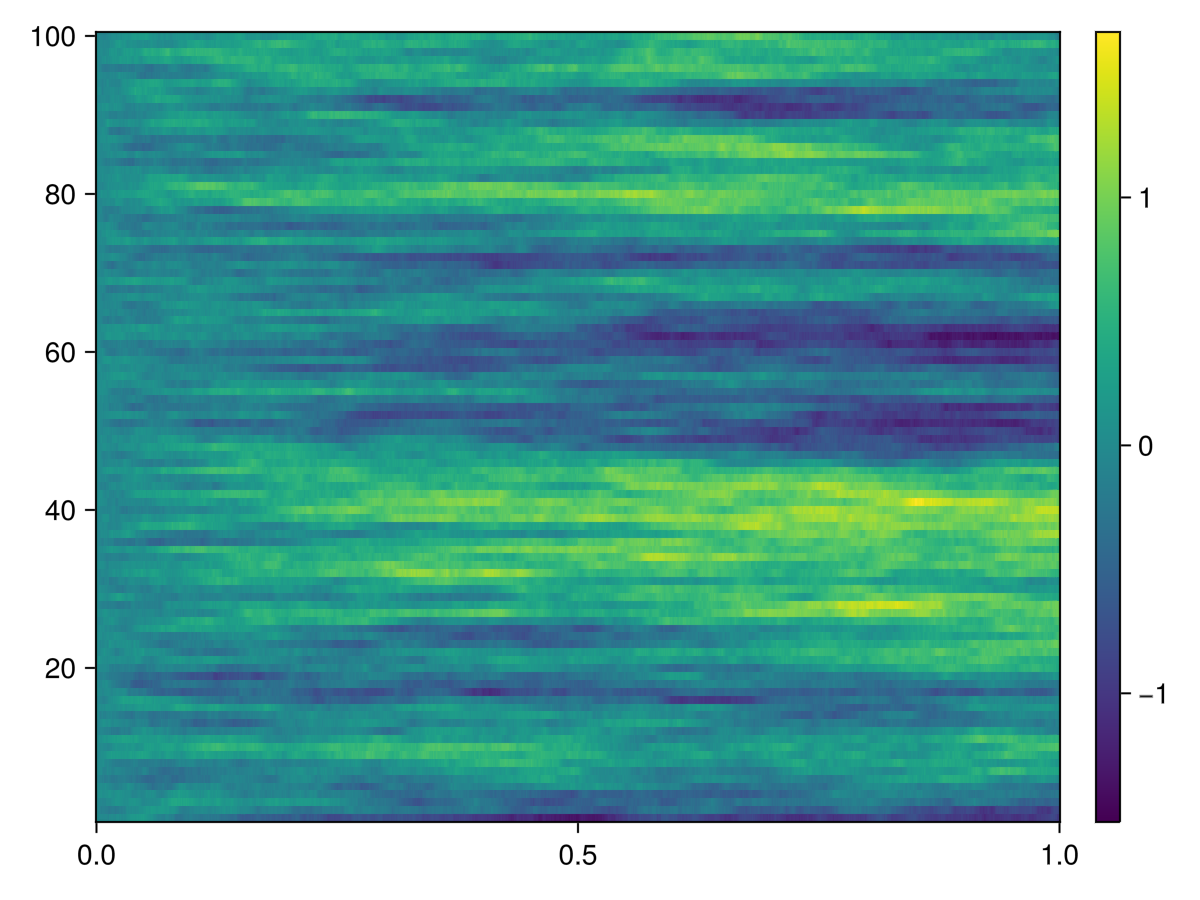}\includegraphics[width=0.49\linewidth]{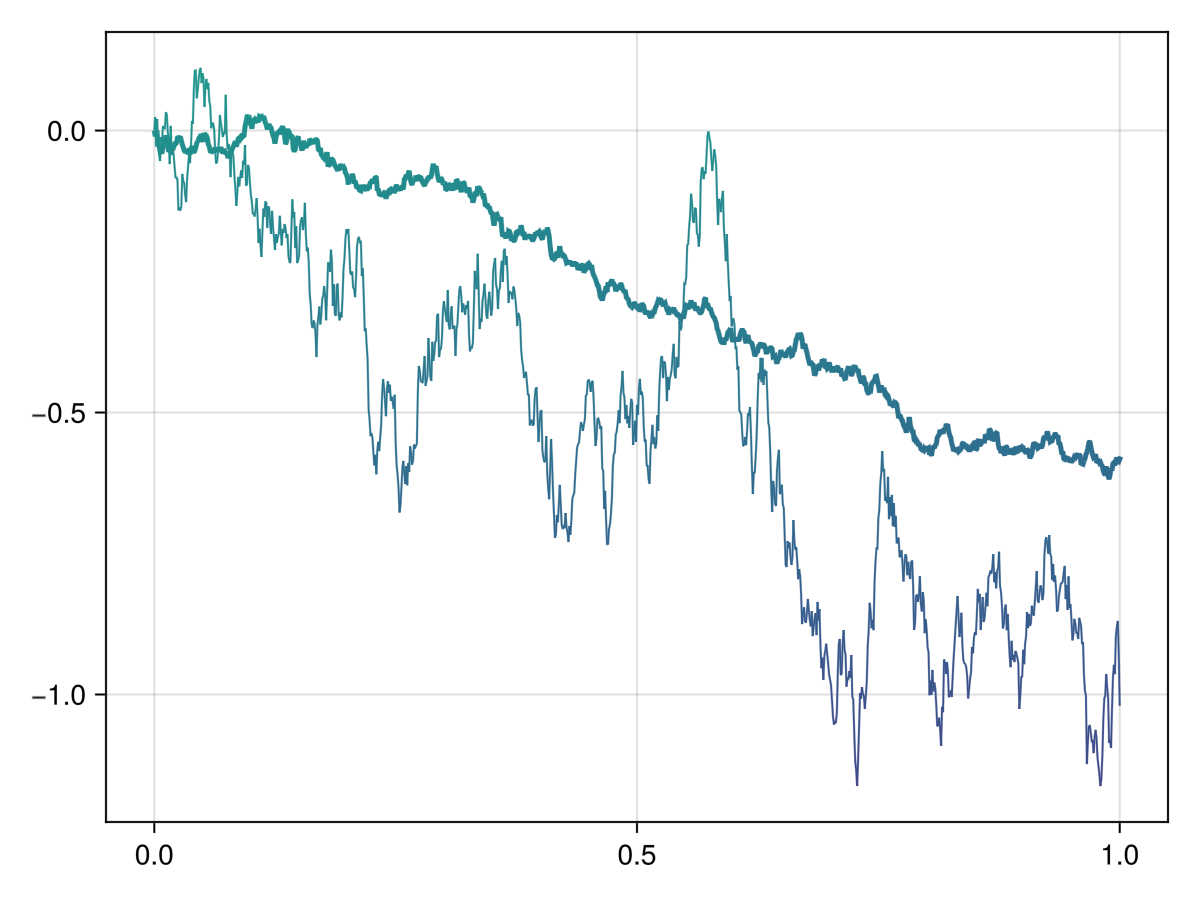}
    \caption{Forward simulation of system \eqref{reaction1}. Left: Heatmap of $(X^{(j)}_{t})$. (Horizontal axis: time. Vertical: Coordinates. Color: Value.) Right: Coordinate process $X^{(50)}$ and, in bold, the corresponding observation process $Y^{(50)}$.}
    \label{fig:forward}
\end{figure}
We observe the process
\[
Y_t = \int_0^t H X_s \, \dd s  +  \beta_t, \quad Y_t =  0, 
\]
with $H = 5$ as well as the noisy final value $\zeta = X_1 + Z$, $Z \sim N(0, 0.1 I_d).$ 

We implemented our procedure for this model in Julia, \cite{Julia-2017}. All code is available from \url{https://github.com/mschauer/GuiSDE.jl}. 
The Euler--Maruyama scheme with step size 0.001 is employed to generate a sample of~$X$ as ground truth and corresponding observation~$Y$.
Figure~\ref{fig:forward} shows the process~$X$ on the left and coordinate processes $X^{(50)}$, $Y^{(50)}$ on the right. 

We approximate $X$ by the linear process $\tilde X = (\tilde X_t, t\in[0,1])$ given by \begin{equation*}
\dd \tilde X_t = -5\Lambda \tilde X_t \, \dd t + \dd W_t, \quad \tilde X_0 = 0.
\end{equation*}
Given our observation~$Y$, we solve the backward system from proposition~\ref{backwardkalmanbucy} to obtain $U^\circ$, see~\eqref{Ucirc}.

We use the autoregressive random walk Metropolis--Hastings from section~\ref{cranknicolson} with 5000 iterations to compute an estimate of the process $\mu^\star _t = \EE [X_t \mid Y, \zeta]$. Figure~\ref{fig:smooth} shows the obtained estimate as well as a sample of $X \mid Y, \zeta$. 
\begin{figure}
    \centering
    \includegraphics[width=0.49\linewidth]{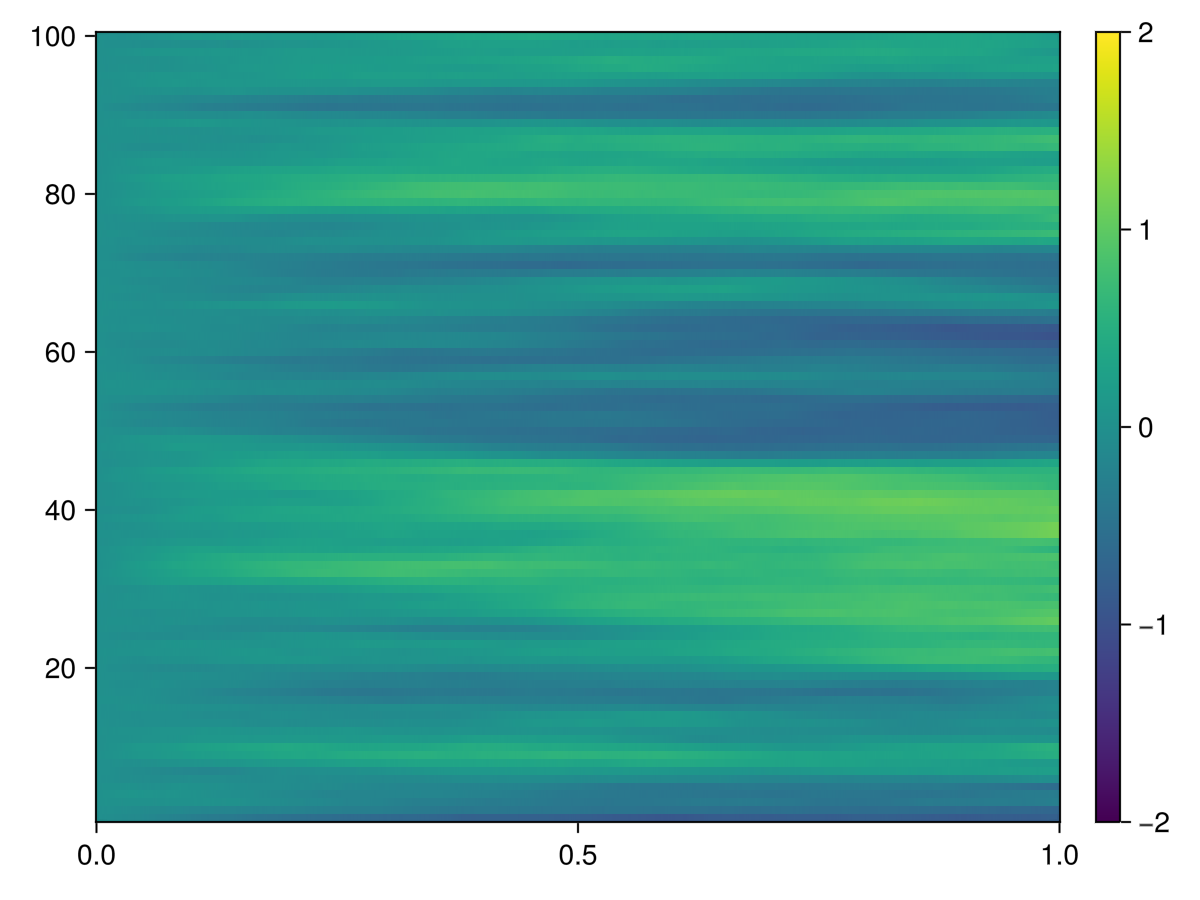}\includegraphics[width=0.49\linewidth]{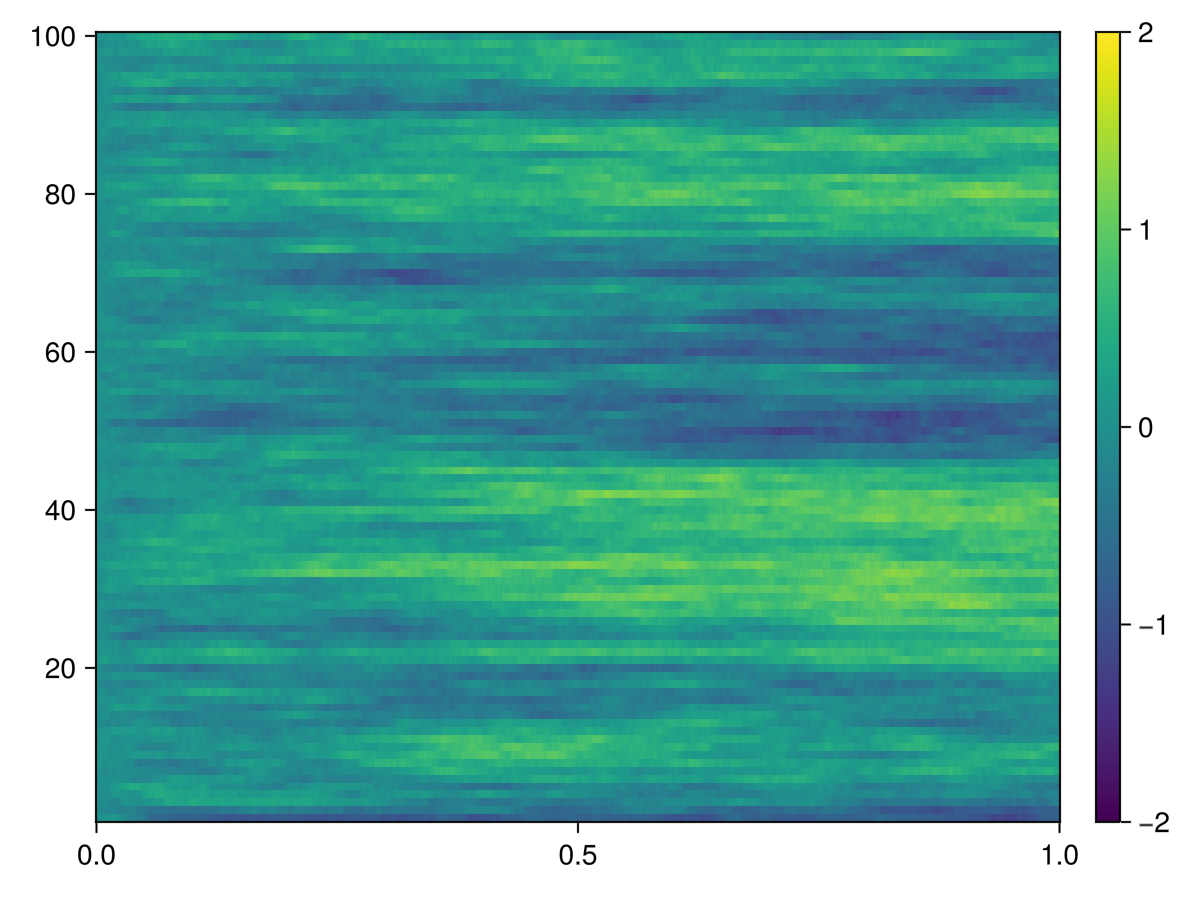}
    \caption{Left:  Estimate $\EE [X_t \mid Y, \zeta]$. Right: A sample of $X \mid Y, \zeta$.}
    \label{fig:smooth}
\end{figure}

Finally, we fit a variational approximation to the conditional law $X \mid Y, \zeta$ using the approach of section~\ref{adam} on the system. Here we take the same parameters, except the dimension is $d=20$, to allow the use of (simpler) forward mode automatic differentiation. The results are shown in figure~\ref{fig:variational}.

For $B$ we consider all symmetric, tridiagonal matrices, parametrized as
\begin{equation*}
B_t^{(i,i)} = \theta_i, \quad 1 \le i \le d,  \quad B_t^{(i, i+1)} = B_t^{(i+1, i)} = \theta_{d+i}, 
\quad 1 \le i < d
\end{equation*}
and $m_t$ we parametrize as $m_t^{(i)} = \theta_{ 2d + i-1}$.
We denote by $\theta^\circ$ the ad hoc choice $B(\theta^\circ) = -5\Lambda$ from earlier. 
We use the Julia package ForwardDiff (\cite{RevelsLubinPapamarkou2016}) to compute gradients and the Julia package Optimisers from Julia's Flux ecosystem (\cite{Flux.jl-2018}) for the optimisation. 

We run Adam with learning rate parameter $\eta = 0.01$ and starting value $\theta=0$ for $2000$~iterations to find $\theta^\star$ maximizing the expected reward~\eqref{reward2}. 
The training took 187\,min on a single Intel Xeon Platinum 8180 CPU at 2.50GHz.

\begin{figure}
    \centering
    \includegraphics[width=0.32\linewidth]{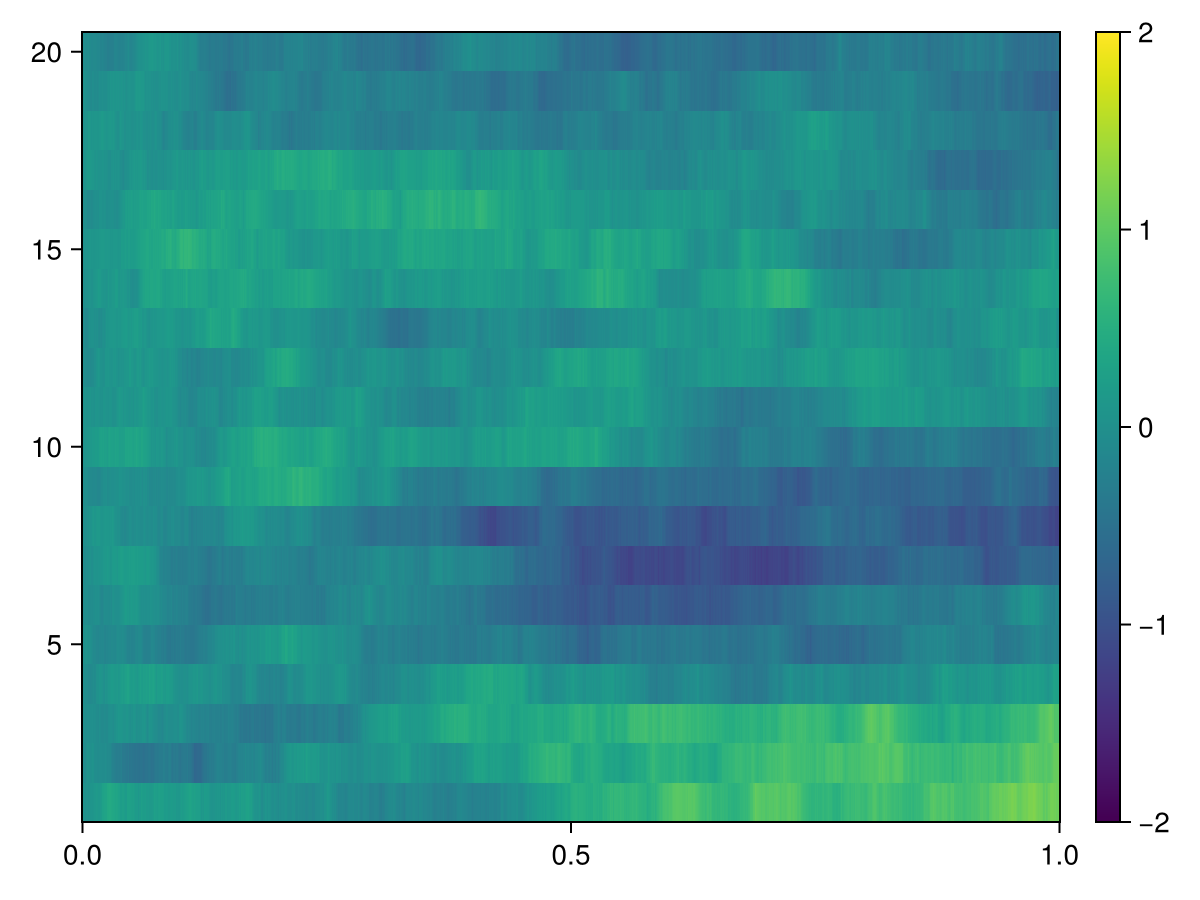}
    \includegraphics[width=0.33\linewidth]{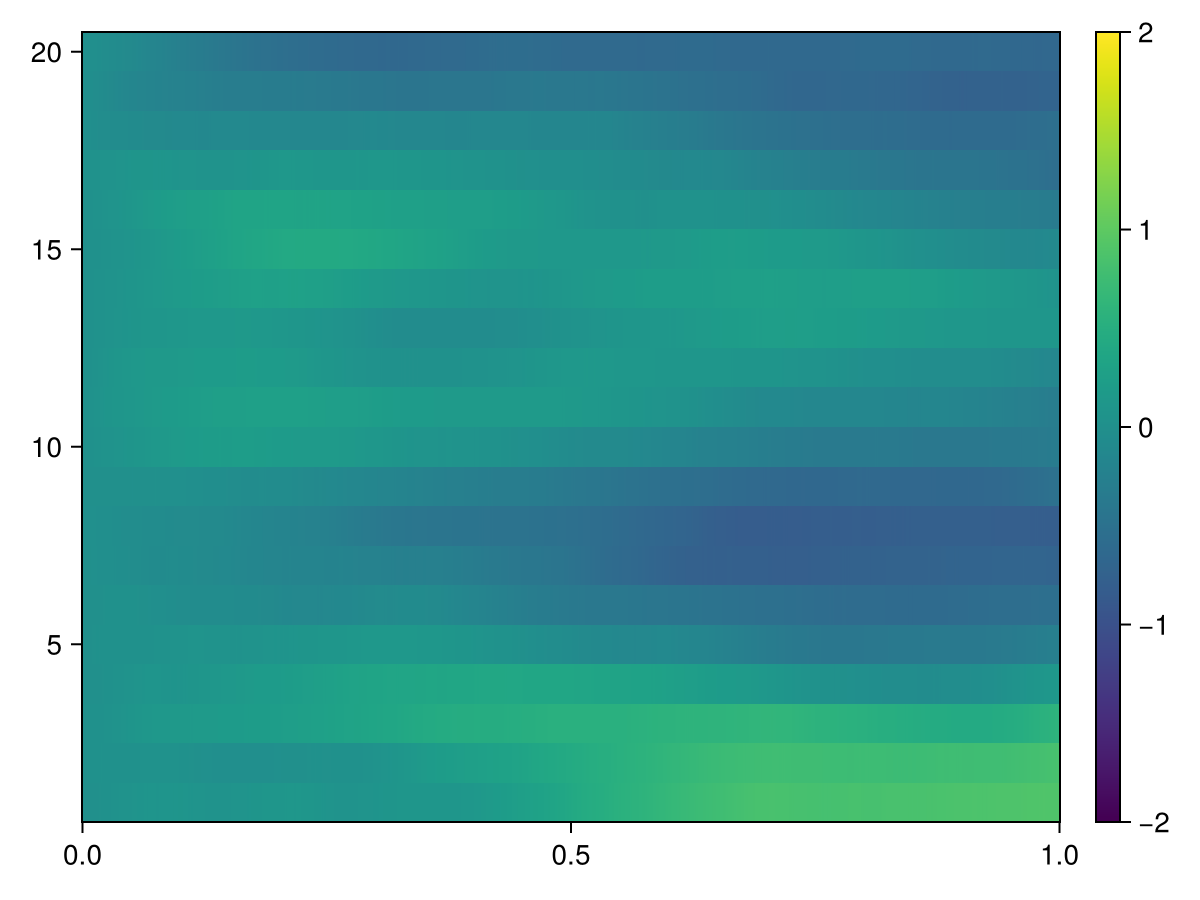}\includegraphics[width=0.33\linewidth]{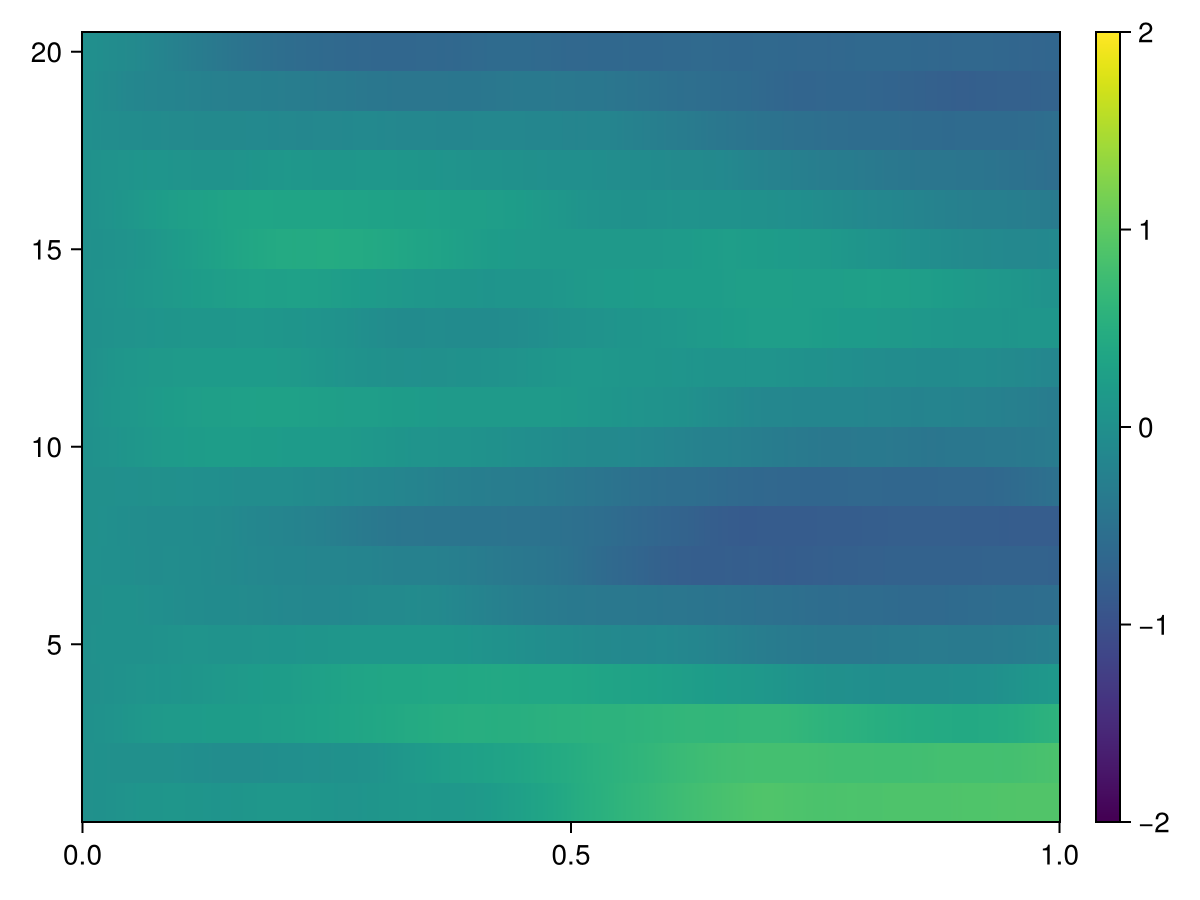}
    \caption{Left: Forward simulation of the system \eqref{reaction1}, $d=20$. Centre: Posterior mean computed with Metropolis--Hastings for a noisy observation of the left sample. Right: Mean of variational approximation of the same posterior.\label{fig:variational}}
\end{figure}

Figure~\ref{fig:training} shows the training loss curve and the entries of the matrix $B(\theta^\star)$, and for comparison, the matrix $-5\Lambda$, as heatmap.
\begin{figure}
    \centering
    \includegraphics[width=0.3\linewidth]{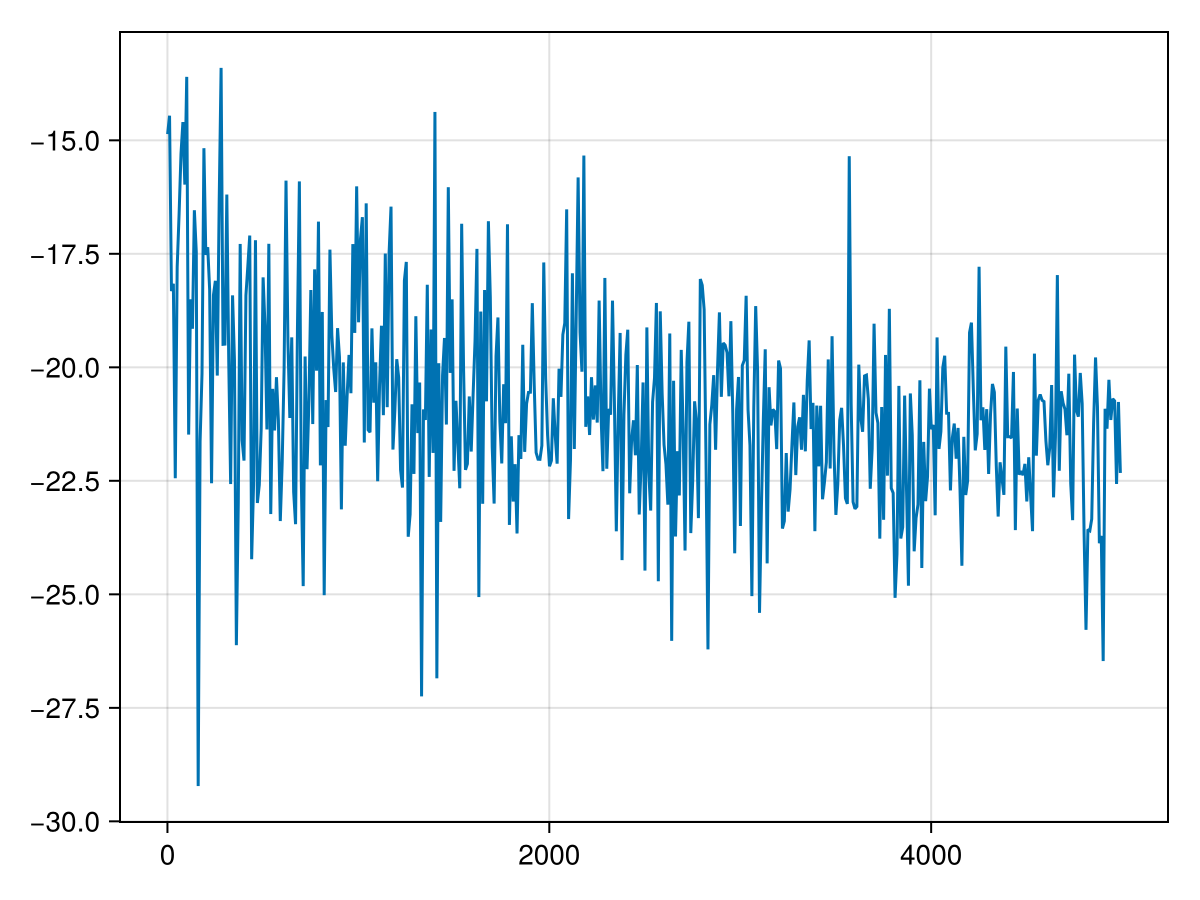} \includegraphics[width=0.3\linewidth]{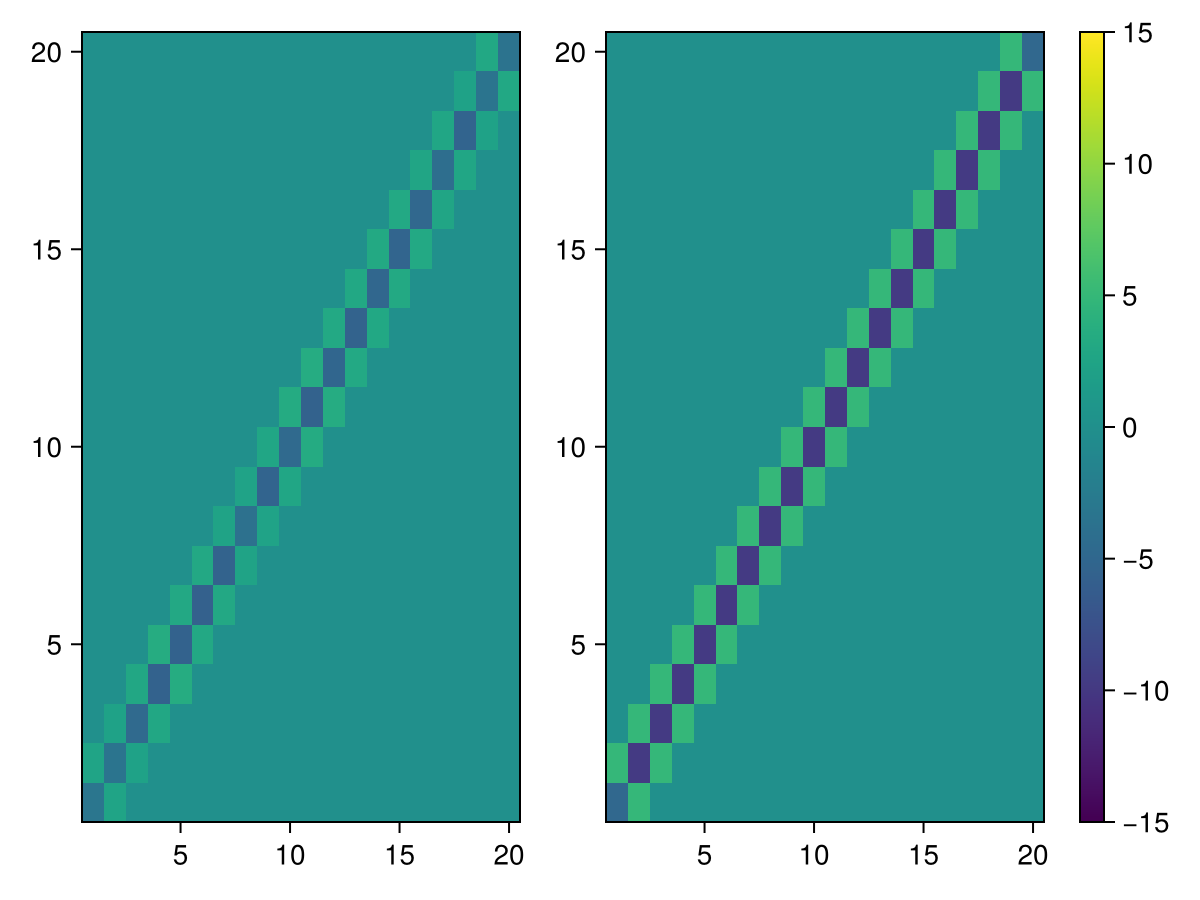}\includegraphics[width=0.3\linewidth]{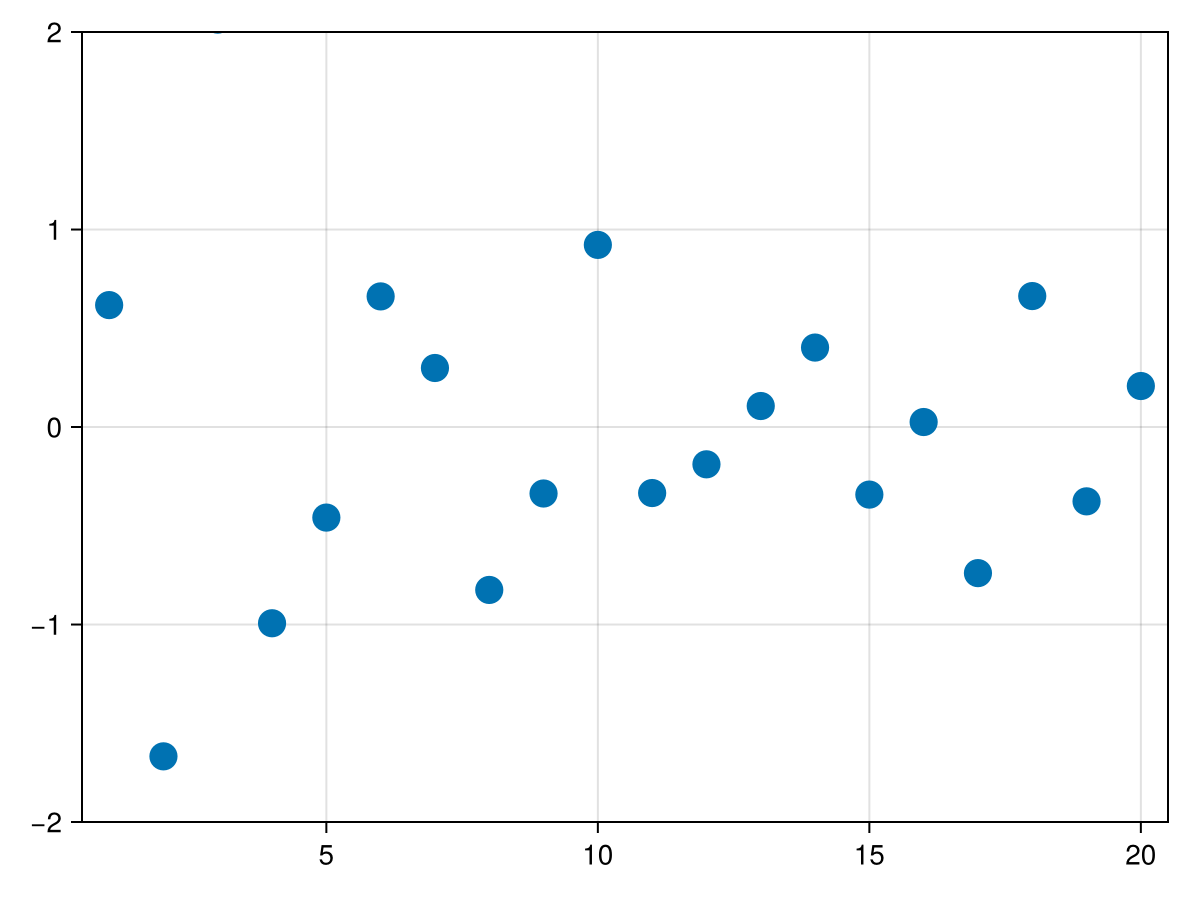}
    \caption{Left:  Loss curve (negative reward versus iteration). Center: $B(\theta^\star)$ versus   $-5\Lambda$. Right: $m(\theta^\star)$.}
    \label{fig:training}
\end{figure}
Our procedure finds a decent approximation to the posterior. 
To compare, we compare both $\hat\mu_t =  \EE^\circ_{\theta^\star} [X_t \mid Y, \zeta]$ and $\mu^{\circ} _t =  \EE^\circ_{\theta^\circ} [X_t \mid Y, \zeta]$ with $\mu^\star$, the posterior mean. Here $\PP^\circ_{\theta}$ is the $\PP^\circ$ measure corresponding to the choice~$\theta$ of the variational parameter.
While $\mu^{\circ}$ is already quite close to $\mu^\star$, $\hat\mu$ is much closer, $\mu^\star$ see figure~\ref{fig:error}.  
\begin{figure}
    \centering
    \includegraphics[width=0.49\linewidth]{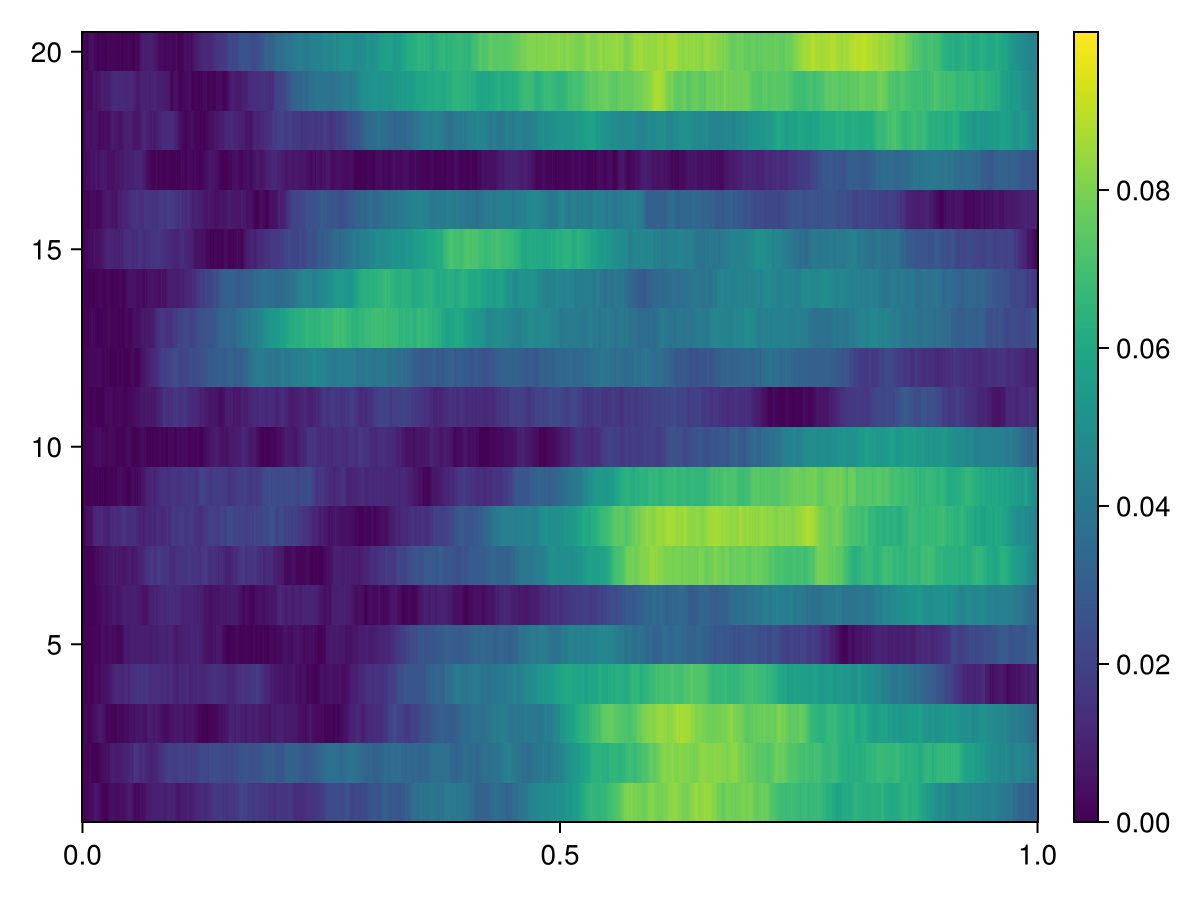}\includegraphics[width=0.49\linewidth]{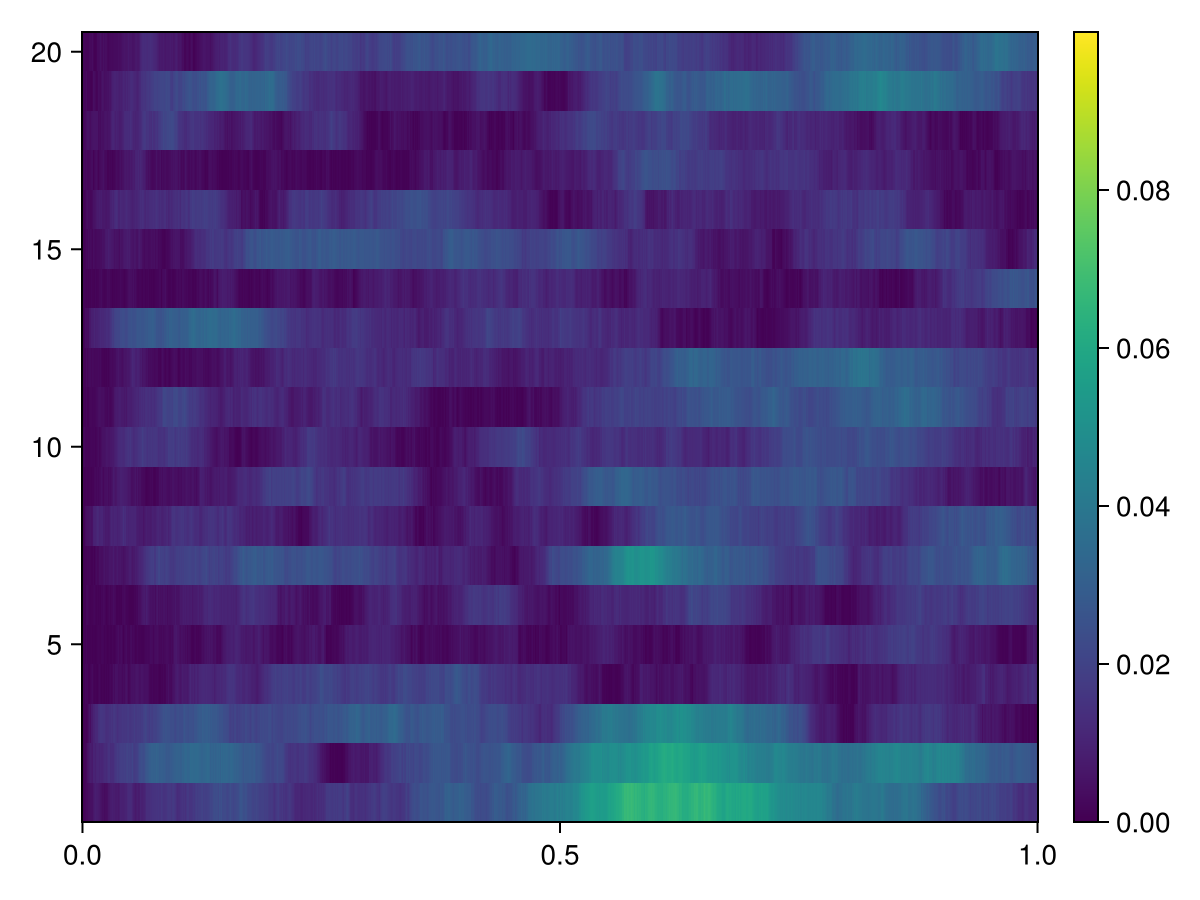}
    \caption{Left:  Error $|\mu^{\star,(i)}_t-\mu^{\circ,(i)}_t|$ corresponding to the ad hoc choice. Right: Error $|\mu^{\star,(i)}_t-\hat\mu^{(i)}_t|$ corresponding to the variational optimum.}
    \label{fig:error}
\end{figure}

\subsection*{Acknowledgements}
The authors like to thank Kasper Bågmark for helpful discussions.
The work presented in this article was supported by the Chalmers AI Research Centre (CHAIR) through the project ``Stochastic Continuous-Depth Neural Network'', by the European Union (ERC) through grant no.~101088589, ``StochMan'', and by the Swedish Research Council (VR) through grant no.~2020--04170. Views and opinions expressed are however those of the authors only and do not necessarily reflect those of the European Union or the European Research Council Executive Agency. Neither the European Union nor the granting authority can be held responsible for them.

\printbibliography

\appendix

\section{Proofs}
\label{app:proofs}

\begin{proof}[Proof of corollary~\ref{parametrized}]
By Clark's formula, \cite[IV.41]{RogersWilliams2}, cited after \cite{2637511},  and the assumed independence,  
\begin{align*}
[p, W]_t & = [ p(\cdot, X_\cdot, \Theta_\cdot), W]_t  =  \int_0^t (\Phi_t \nabla_x v(s, X_s, \Theta_s))' \dd [X]_s \\
&= \int_0^t (\sigma'(s, X_s)\nabla_x \log v(s, X_s, \Theta_s))' v(s, X_s, \Theta_s) \dd s. \qedhere
\end{align*}
\end{proof}

\begin{proof}[Derivation of remark~\ref{palmowski}]
Rewrite for clarity the stochastic PDE~\eqref{parabolic2}
\[
\dd v(t, \cdot)  = -\fL v(t, \cdot) \, \dd t - v(t, \cdot) (H_t \cdot)'  \diamond \dd Y_t. 
\]
By It\^o's formula for stochastic backward integrals 
\[
\dd \log v(t, \cdot)  = \frac{1}{v(t, \cdot)} \, \dd v(t, \cdot) + \frac12\|(H_t \cdot)\|^2 \, \dd t. 
\]
Plugging in $\dd v$ yields
\[
\dd \log v(t, \cdot)  = -\frac{1}{v(t, \cdot)} \fL v(t, \cdot) \, \dd t -  (H_t \cdot)'  \diamond \dd Y_t + \frac12\|H(\cdot)\|^2 \, \dd t 
\]
and
\[
  \dd \log v(t, \cdot) + \fL \log v(t, \cdot) \, \dd t +\frac12\|\sigma'\nabla \log v(t, \cdot))\|^2 \, \dd t +  (H_t \cdot)'  \diamond \dd Y_t - \frac12\|(H_t \cdot)\|^2 \, \dd t = 0
\]
Lastly, $(H_t \cdot)'  \diamond \dd Y_t  = (H_t \cdot)' \, \dd Y_t$.
\end{proof}

\begin{proof}[Proof of proposition~\ref{backwardkalmanbucy}.]
This can be proven via~\eqref{parabolic2} by choosing the ansatz \eqref{linearansatz},
so that 
\[
C_t = \int v(t, x) \, \dd x,\quad 
\nu_t = \frac{1}{C_t}\int x v(t, x) \, \dd x, \quad \text{ and }\quad 
P_t = \frac{1}{C_t}\int (x - \nu_t)(x - \nu_t)' v(t, x) \, \dd x. 
\]
We only derive the stochastic differential equation for the process $C$, the equations for $\nu$ and $P$ are derived in \cite{FraserPotter1969}.

Note now that
\[
\nabla_x v(t,x) =  P_t^{-1}(\nu_t - x) v(t,x)\]
and
\[\frac12 \sum_{i,j=1}^n a_{t, ij} \frac{\partial^2}{\partial x_i \partial x_j} v(t, x) =  \frac12 \left(\|\sigma_t'P^{-1}_t(\nu_t - x)\|^2 - \operatorname{Tr}(a_t' P^{-1}_t)\right)v(t,x).
\]
Furthermore, we observe that
\[
\int  \frac12 \left(\|\sigma_t'P^{-1}_t(\nu_t - x)\|^2 - \operatorname{Tr}(a_t P^{-1}_t)\right)v(t,x) \, \dd x = 0
\]
and
\[
\int (B_t x + m_t)' P_t^{-1}(\nu_t - x) v(t,x) \, \dd x = -\operatorname{Tr}(B_t) C_t.
\]
So, plugging everything into~\eqref{parabolic2} and using stochastic Fubini to integrate over $x$, we obtain as equation for~$C$
\[
\dd C_t =  \operatorname{Tr}(B_t) C_t \, \dd t -  C_t (H_t \nu_t)' \diamond \dd Y_t 
\]
and therefore
\[
\dd \log C_t = \operatorname{Tr}(B_t) \, \dd t -  (H_t \nu_t)'  \diamond \dd Y_t + \frac12 \|H_t \nu_t\|^2 \, \dd t . \qedhere
\]
\end{proof}

\begin{proof}[Proof of lemma~\ref{tildev}]
The result follows from applying It\^o's formula and collecting terms. 
{\small
\begin{align*}
\dd\log \tilde v(t, X_t) =& +\dd\log C_t - \tfrac12\dd \log |P_t| + \dd \left(-\tfrac12(X_t-\nu_t)' P_t^{-1} (X_t-\nu_t)\right)\\
 =&   \operatorname{Tr}(B_t) \, \dd t -  (H_t \nu_t)' \diamond \dd Y_t + \tfrac12 \|H_t \nu_t\|^2 \, \dd t  - \tfrac12\operatorname{Tr}((\pder t P_t) P_t^{-1}) \, \dd t\\
& -(\dd X_t-\dd \nu_t)' P_t^{-1} (X_t-\nu_t) -(\dd X_t-\dd \nu_t)' P_t^{-1} (\dd X_t - \dd \nu_t)/2\\
& -\tfrac12( X_t- \nu_t)' (\pder t P_t^{-1}) (X_t-\nu_t) \, \dd t \\
= &-  (H_t \nu_t)' \, \dd Y_t + \tfrac12 \|H_t \nu_t\|^2 \, \dd t -  (H_t \dd \nu_t)' \, \dd Y_t - \tfrac12\operatorname{Tr}( P_t H_t'  H_t) \, \dd t + \tfrac12\operatorname{Tr}(\tilde a_t  P_t^{-1}) \, \dd t\\
& +  (\dd X_t-\dd \nu_t)' P_t^{-1} (\nu_t-X_t)  -(\dd X_t - \dd \nu_t)' P_t^{-1} (\dd X_t - \dd \nu_t)/2\\
& +\tfrac12 ( X_t- \nu_t)' P^{-1}_t(B_t P_t + P_tB_t' ) P^{-1}_t (X_t-\nu_t) \, \dd t  -\tfrac12 ( X_t- \nu_t)'  P^{-1}_t \tilde a_t P^{-1}_t (X_t-\nu_t) \, \dd t \\
& +\tfrac12 \left(( X_t- \nu_t)'   H_t'  H_t  (X_t-\nu_t)\right) \, \dd t \\
= &-  (H_t \nu_t)' \, \dd Y_t + \tfrac12 \|H_t \nu_t\|^2 \, \dd t\\
& + \operatorname{Tr} (H_t P_t H_t') \, \dd t - \tfrac12\operatorname{Tr}( P_t H_t'  H_t) \, \dd t + \tfrac12\operatorname{Tr}(\tilde a_t  P_t^{-1}) \, \dd t\\
& +  (b(t, X_t) \, \dd t   - (X\nu_t + m) \, \dd t )' P_t^{-1} (\nu_t-X_t) \\
& +  (H_t(\nu_t-X_t))' \, \dd Y_t    - ( H_t'   H_t \nu_t )'  (\nu_t-X_t) \, \dd t\\
& +  ( \sigma(t, X_t) \, \dd W_t )' P_t^{-1} (\nu_t-X_t) \\
& -(\dd X_t )' P_t^{-1} (\dd X_t  )/2 -( \dd \nu_t)' P_t^{-1} (\dd \nu_t  )/2\\
& -(X( X_t- \nu_t))'P^{-1}_t (\nu_t - X_t) \, \dd t  -\tfrac12 ( X_t- \nu_t)'  P^{-1}_t  \tilde a_t P^{-1}_t (X_t-\nu_t) \, \dd t \\
& + \tfrac12 X_t '  H_t'  H_t  X_t \, \dd t -  \nu_t'  H_t'  H_t  X_t \, \dd t   + \tfrac12 \nu_t'   H_t'  H_t    \nu_t \, \dd t \\
= &-  (H_t X_t)' \, \dd Y_t + \tfrac12 \|  H_t  X_t \|^2 \, \dd t\\
& +  (b(t, X_t) \, \dd t   - (XX_t + m) \, \dd t )' P_t^{-1} (\nu_t-X_t) \\
& - \tfrac12\operatorname{Tr}((a(t, X_t) - \tilde a) P_t^{-1}) \, \dd t +\tfrac12 ( X_t- \nu_t)'  P^{-1}_t (-a- \tilde a) P^{-1}_t (X_t-\nu_t) \, \dd t \\
& +  (\sigma'(t, X_t) P_t^{-1} (\nu_t-X_t))' \, \dd W_t  +\tfrac12 \|\sigma (s, X_t) P^{-1}_t (X_t-\nu_t)\|^2 \, \dd t \qedhere
\end{align*} 
}
\end{proof}

\begin{proof}[Proof of proposition~\ref{balance}]

To establish detailed balance,
and therefore prove the proposition, by \cite{10.1214/aoap/1027961031}, page 2, we need to show 
\[
\alpha(z, z') \pi(\dd z) Q(z, \dd z')  = \alpha(z', z) \pi(\dd z') Q(z', \dd z). 
\]
In our case,
\begin{align*}
\alpha(z, z') \pi(\dd z) Q(z, \dd z') &= \alpha(z, z') \frac{\dd\pi}{\dd \P_W}(z) \P_W(\dd z)Q(z, \dd z')  \\
&= \alpha(z', z) \frac{\dd\pi}{\dd \P_W}(z') \P_W(\dd z)Q(z, \dd z') \\
&= \alpha(z', z) \frac{\dd\pi}{\dd \P_W}(z') \P_W(\dd z')Q(z', \dd z)\\
&=\alpha(z', z) \pi(\dd z') Q(z', \dd z), 
\end{align*}
follows from $\min(a/b, 1)\cdot b = \min(b/a, 1)\cdot a$ and the symmetry of the measure 
\[
\lambda(\dd w, \dd w') = \P_W(\dd w) Q(w, \dd w').\qedhere
\]

\end{proof}

\end{document}